\documentclass{article}
\usepackage{graphicx} 
\usepackage{hyperref}
\usepackage{color}
\usepackage{mathrsfs}
\usepackage{eufrak}
\usepackage[mathscr]{eucal}
\usepackage{stmaryrd}
\usepackage{amsfonts}
\usepackage[english]{babel}
\usepackage{bm}
\usepackage{tikz-cd}
\usepackage{mathtools}
\usepackage{amsmath,amssymb}
\usepackage{amsthm}
\usepackage{geometry}
\usepackage[math]{cellspace}
\DeclarePairedDelimiter\inn{\langle}{\rangle}

\usepackage{mathtools, slashed}

\newcommand{\pd}[2]{\dfrac{\partial #1}{\partial #2}}

\newcommand{\pdo}[1]{\dfrac{\partial}{\partial #1}}

\usepackage{theoremref}
\usepackage{makeidx}
\usepackage{mathrsfs}

\newtheorem{proposition}{Proposition}[section]
\newtheorem{theorem}{Theorem}[section]

\DeclareMathOperator{\Sol}{Sol}

\newcommand{\RomanNum}[1]
    {\MakeUppercase{\romannumeral #1}}

\title{Ruled Minimal Surfaces in the 3-dimensional Solvable Group}
\author{Tiago Franco Ferreira\thanks{Universidade Estadual de Campinas-UNICAMP, email:t194445@dac.unicamp.br }}
\date{}

\begin{document}

\maketitle

\begin{abstract}
    We study ruled minimal surfaces in a family of three-dimensional solvable Lie groups depending on a real parameter $p$ and endowed with left-invariant metrics. For $p\neq 0$, we classify all orthogonal ruled surfaces and construct new examples of minimal surfaces. For $p=0$, we parametrize all ruled surfaces and obtain additional examples of minimal surfaces.
\end{abstract}

\vspace{0.1cm} 
\noindent \textbf{Keywords:} Minimal Surfaces, Ruled Surfaces, Solvable Lie Groups.

\vspace{0.1cm}
\noindent \textbf{2020 Mathematics Subject Classification:} 53A10 , 22E25, 14J26

\section{Introduction}

$\indent$Among the three-dimensional homogeneous spaces, Thurston's eight model geometries provide a particularly rich setting for studying these surfaces. The 3-dimensional solvable group, $\Sol(3)$, is one of these standard geometries. In this paper, we focus on finding family of examples of ruled minimal surfaces within a specific family of left-invariant metrics on $\Sol(3)$, motivated by non-standard algebraic structures.
\\

To fully appreciate the geometry of $\Sol(3)$, it is instructive to place it within the broader landscape of ruled minimal surfaces in three-dimensional homogeneous manifolds.The classification of these surfaces has been a prominent research direction since Catalan's classical theorem in Euclidean space (Section 3 of \cite{book 1}). This enduring interest is driven by their role as a rich source of both minimal and developable submanifolds, as exemplified in chapter 1, sections 1.2 and 1.3, of \cite{article 9}. In recent years, significant progress has been made across various spaces, systematically extending the theory beyond standard space forms. For instance, complete classifications of ruled minimal surfaces have been achieved in the three-dimensional Heisenberg group, which corresponds to the nilpotent geometry $Nil_3$ (Shin et al., 2013), and in the Berger spheres (Shin et al., 2023). Similar characterizations have been successfully established for product spaces such as $\mathbb{S}^2 \times \mathbb{R}$ and $\mathbb{H}^2 \times \mathbb{R}$, where nontrivial ruled minimal surfaces are strictly helicoids (Kim et al., 2009). Within this continuum of 3-manifolds, $\Sol(3)$ presents a distinctly different geometric behavior. Unlike the Heisenberg group, $\Sol(3)$ possesses a solvable but non-nilpotent algebraic structure, which profoundly impacts the behavior of its geodesics and the resulting ruled surfaces.
\\

The group $\Sol(3)$ is traditionally identified with $(\mathbb{R}^3\,,\,\cdot )$, where the group operation is defined by 
$$(x, y, z) \cdot (x', y', z') = \left(x+x' ,\ \exp(Ax)(y', z') + (x, y)\right) \quad \text{for } A \in \mathbb{M}_{2\times 2}(\mathbb{R}) .$$ 
\\[-7pt]

Consequently, the Lie algebra of this space can be written as a semi-direct product $\mathfrak{g}(A)= \mathbb{R}\times_{A}\mathbb{R}^{2}$ generated by $e_{1}=(1,0,0)$, $e_{2}=(0,1,0)$, and $e_{3}=(0,0,1)$. The standard commutation relations are given by $[e_{1}, e_{2}] = Ae_{2}$, $[e_{1}, e_{3}] = Ae_{3}$, and $[e_{2}, e_{3}] = 0$.
\\

According to a classical result (Theorem 1.4 of Chapter 7 from \cite{book 4}), every real three-dimensional Lie algebra that is solvable and non-nilpotent is isomorphic to a product $\mathbb{R}\times_{\theta}\mathbb{R}^{2}$
where 
$$\theta\in \left\{
\left(\begin{matrix}
	1&1\\
	0&1
\end{matrix}\right),\ 
\left(\begin{matrix}
	1&0\\
	0&\gamma
\end{matrix}\right) \text{ where } |\gamma|\le 1,\ 
\left(\begin{matrix}
	p&-1\\
	1&p
\end{matrix}\right) \text{ where } p\in \mathbb{R}
\right\} . $$
\\

While previous works, such as \cite{article 4}, typically investigate this algebra using the standard diagonal indexing matrix $A = \operatorname{diag}(1, -1)$, we direct our attention to the specific case where $$A = \begin{pmatrix} p & -1 \\ 1 & p \end{pmatrix}$$ for $p \in \mathbb{R}$. This choice of indexing matrix leads to the non-standard commutation relations $[e_{1}, e_{2}] = p e_{2} + e_{3}$, $[e_{1}, e_{3}] = -e_{2} + p e_{3}$, and $[e_{2}, e_{3}] = 0$. Equipping this space with the dual coframe $(\lambda^{1},\lambda^{2},\lambda^{3})$, we consider the family of left-invariant metrics $$g_{\varepsilon,\alpha} = \varepsilon^{2}(\lambda^{1})^{2} + \alpha^{2}((\lambda^{2})^{2} + (\lambda^{3})^{2})$$ for $\varepsilon, \alpha \in \mathbb{R}_{>0}$.
\\

In Section 1, we define ruled surfaces immersed in Riemannian manifolds and demonstrate that these surfaces are naturally characterized by a specific form of the manifold's geodesic equation. We then specialize our analysis to ambient manifolds that are Lie groups equipped with a left-invariant metric, showing that the tangent vectors of the immersed ruled surfaces can be fully determined using only information from the group's Lie algebra. In Section 2, we use the Koszul formula to compute the covariant derivative associated with the left-invariant metric $g_{\varepsilon, \alpha}$ and, in Theorem \ref{Th1 Sol3 gen}, we characterize all possible tangent frames for orthogonal ruled surfaces when $p \neq 0$. Based on this, Theorem \ref{Ruled Surf Sol(3) geral} provides explicit parameterizations of these orthogonal ruled surfaces, allowing us to identify families of examples of minimal surfaces in Theorem \ref{The Min Surf Geral}. Finally, in Section 3, we further explore the specific case where $p=0$ to obtain parameterizations for all possible ruled surfaces (Theorem \ref{Ruled Surf Param Espec}) and identify additional minimal surfaces in Theorem \ref{The Min Surf Espec}.

\section{Generalities on Ruled Surfaces}
\subsection{Ruled Surfaces in Riemannian Manifolds}
Given a 3-dimensional Riemannian manifold, $(M\,,\,g)$, a smooth curve $\gamma:(t_{1}\,,\,t_{2})\to M $ and a nowhere vanishing smooth vector field along $\gamma$, $X:(t_{1}\,,\,t_{2})\to TM$, a smooth ruled surface is given by
$$F(t\,,\,s) \ 
=\ \exp_{\gamma(t)}(s\,X(t)) . $$

Also $s$ is called the rulling direction, the map $s\mapsto F(t_{0}\,,\,s) $ is called the rulling or generator and $dF(\partial_{s})$ the rulling vector field.
From such definition it is noticeable that, being $\nabla$ the Levi-Civita connection, $F_{s}\ =\ dF(\partial_{s})$, then
\begin{equation}\label{Ruled Cond}
    \nabla_{F_{s}}F_{s}\ =\ 0 .
\end{equation}

Therefore, being $(W\,,\,x) $ a local chart of M, $F((t_{1},t_{2})\times (s_{1},s_{2}))\subset W$ and $(F^{1},F^{2},\dots, F^{n}) $ the component functions of F in such chart, in order to calculate the ruled minimal surfaces in M, we have to solve the system of second order differential equations:
$$\forall\,j=1\,,\,2\,,\,3\,,\ \ 
\pd{^{2}F^{j}}{s^{2}}(t\,,\,s)\ +\ \Gamma^{j}_{ik}(F(t\,,\,s))\pd{F^{i}}{s}(t\,,\,s)\pd{F^{i}}{s}(t\,,\,s)\ =\ 0. $$

\subsection{Ruled Surfaces in General 3-Dimensional Lie Groups}\label{Sec 3}
In this section we will discuss the case of maps $F:U\to G\ (U\subset \mathbb{R}^{2} \text{ open }) $, where G is a Lie Group endowed with a left invariant metric $g$. Let $\{u_{1}\,,\,u_{2}\,,\,u_{3}\}$ be a left invariant global basis of vector fields and $\{C_{ki}^{j}\}_{i\,,\,k\,,\,j=1}^{3} $ the structure constants of the Lie Algebra $\mathfrak{g}$ of G, that is,
$$\forall\,i\,,\,k=1\,,\,2\,,\,3\,,\ 
[u_{i}\,,\,u_{k}]\,=\, C_{ik}^{1}u_{1}\ +\ 
C_{ik}^{2}u_{2}\ +\ 
C_{ik}^{3}u_{3} .$$

Now, if $\{u_{1}\,,\,u_{2}\,,\,u_{3}\}$ is orthonormal, then the Levi-Civita connection associated to $g$ will be given by:
$$\nabla_{u_{k}}u_{i}\ =\ \sum_{j}\,L_{ik}^{j},u_{j} \ \ ,\ \ \ 
L_{ik}^{j}\ =\ \frac{1}{2}\,\left(C_{ki}^{j} \ -\ C_{ij}^{k} \ +\ C_{jk}^{i}\right) .$$

This means that, being $(W\,,\,x) $ a local chart of G with $F(U)\subset W $, if we write 
$$TF(\partial_{2})\,=\,F_{s}\ 
=\ F_{s}^{1}\,u_{1}\ +\ 
F_{s}^{2}\,u_{2}\ +\ 
F_{s}^{3}\,u_{3} \ 
=\ \pd{F^{1}}{s}\,\pdo{x^{1}} \ +\ 
\pd{F^{2}}{s}\,\pdo{x^{2}}\ +\ 
\pd{F^{3}}{s}\,\pdo{x^{3}} .$$
then, since, in the left invariant frame, the symbols $L_{ik}^{j}$ are simply real constants, we can separate the second order system of differential equations $\nabla_{F_{s}}F_{s}=0 $ into two systems of first order equations:
\begin{equation}\label{general V2 eq}
    \forall\,j=1\,,\,2\,,\,3\,,\ \ 
\pd{F_{s}^{j}}{s}\ +\ \sum_{k\,,\,i=1}^{3}L^{j}_{ik}F_{s}^{i}F_{s}^{k}\ =\ 0 ,
\end{equation}

\begin{equation}\label{Chart eq}
    \forall\,j=1\,,\,2\,,\,3\,,\ \ 
\pd{F^{j}}{s} \ =\ \sum_{k=1}^{3}F_{s}^{k}(t\,,\,s)\,A_{k}^{j}(F(t\,,\,s) ,
\end{equation}
where $A=(A_{k}^{j}) $ is the change of basis matrix:
$$u_{k}\ =\ \sum_{j=1}^{3}A_{k}^{j}\,\pdo{x^{j}} . $$

Now, depending on the complexity of the product of G, and consequently of the matrix A, finding a closed form for the solution of \eqref{Chart eq} can be unattainable. However, we do not necessarily need an explicit form of the map F in order to find the mean curvature of the surface, since, for that, it suffices to have the expressions of $F_{t}$ and $F_{s}$. With that in mind, we point to the fact that:
$$\nabla_{F_{t}}F_{s}\,-\,\nabla_{F_{s}}F_{t}\ 
=\ TF\left(\left[\pdo{t},\ \pdo{s}\right]\right) \ 
=\ TF(0)\ =\ 0 . $$

Therefore, once one obtains $F_{s}$, form \eqref{general V2 eq}, in order to find $F_{t}$, it is only necessary to solve the system:
$$\forall\,j=1\,,\,2\,,\,3\,,\ \ 
\pd{F_{s}^{j}}{t}\ -\ \pd{F_{t}^{j}}{s} \ +\ C_{ik}^{j}\,F_{t}^{i}\,F_{s}^{k} \ =\ 0 . $$

\hfill
\section{Ruled Surfaces in the Group $\Sol(3)$}
As outlined in the introduction, we first use the Koszul formula to compute the covariant derivative associated with the left-invariant metric $g_{\varepsilon,\alpha}$. Next, by applying the techniques of \cite{article 1}, Theorem \ref{Th1 Sol3 gen} characterizes all tangent frames of orthogonal ruled surfaces for $p\neq0$. This characterization leads to the explicit parameterizations given in Theorem \ref{Ruled Surf Sol(3) geral}, from which, Theorem\ref{The Min Surf Geral} identifies examples of minimal surfaces.
\subsection{Covariant Derivatives}\label{Covariant der}
From the left invariant Frame $\{e_{1}\,,\,e_{2}\,,\,e_{3}\} $, we can define the orthonormal frame
$$\begin{aligned}
    &u_{1}\ =\ \frac{1}{\varepsilon}e_{1} , 
    &u_{2}\ =\ \frac{1}{\alpha}e_{2} , 
    &&u_{3}\ =\ \frac{1}{\alpha}e_{3}
\end{aligned} . $$
\vspace{3pt}
So the commutation relations in this frame will be 
$$[u_{1}\,,\,u_{2}]\ 
=\ \frac{p}{\varepsilon}u_{2}\ +\ \frac{1}{\varepsilon}u_{3} \ ,\ \ 
[u_{1}\,,\,u_{3}]\ =\ 
-\frac{1}{\varepsilon}u_{2}\ +\ \frac{p}{\varepsilon}u_{3}\ ,\ \ 
[u_{2}\,,\,u_{3}] \ =\ 0.$$
\vspace{3pt}

With this we can calculate, via the Koszul formula, the Levi-Civita connection $\nabla$ associated to such metric is 
\vspace{1pt}

$$\begin{aligned}
	&\nabla_{u_{1}}u_{1}\ =\ 0 ,& 
	&\nabla_{u_{1}}u_{2}\ =\ \frac{1}{\varepsilon}u_{3}, & 
	&\nabla_{u_{1}}u_{3}\ =\ -\frac{1}{\varepsilon}\,u_{2}, & 
	&\nabla_{u_{2}}u_{1}\ =\ -\frac{p}{\varepsilon}\,u_{2}, & 
	&\nabla_{u_{2}}u_{2}\ =\ \frac{p}{\varepsilon}\,u_{1}, \\\\
	&\nabla_{u_{2}}u_{3}\ =\ 0,& 
	&\nabla_{u_{3}}u_{1}\ =\ -\frac{p}{\varepsilon}\,u_{3},& 
	&\nabla_{u_{3}}u_{2}\ =\ 0, & 
	&\nabla_{u_{3}}u_{3}\ =\ \frac{p}{\varepsilon}\,u_{1}.
\end{aligned} $$

\hfill
\\
\subsection{Characterization of the tangent frames of orthogonal ruled surfaces}\label{Ruled frames Sol3 gen}
Let  $F:(t_{1}\,,\,t_{2})\times (s_{1}\,,\,s_{2})\to \Sol(3) $ be a ruled surface with ruling geodesic curve $s\mapsto F(t\,,\,s) $, and let us denote:

$$dF\left(\pdo{s}\right)\, 
=\, F_{s}\, 
=\, F_{s}^{1}\,u_{1}\ +\ 
F_{s}^{2}\,u_{2}\ +\ 
F_{s}^{3}\,u_{3} 
\ \ and\ \ 
dF\left(\pdo{t}\right)\,
=\, F_{t}\, 
=\, F_{t}^{1}\,u_{1}\ +\ 
F_{t}^{2}\,u_{2}\ +\ 
F_{t}^{3}\,u_{3} .  $$

Inspired by the techniques presented in \cite{article 1}, we restrict our analysis to the orthogonal case, meaning that we impose the condition $g(F_{t}\,,\,F_{s})\,=\,0 $, in order to derive ruled surfaces. Now, an important property of the ruled orthogonal surfaces is that $\|F_{s}\|_{g} $ is a constant. Indeed, since
$$\pdo{s}\|F_{s}\|_{g}^{2} \ 
=\ \pdo{s}g(F_{s}\,,\,F_{s}) \ 
=\ 2\,g\left(\nabla_{F_{s}}F_{s}\,,\,F_{s}\right) \ 
=\ 0  \text{, and} $$
$$\pdo{t}\|F_{s}\|_{g}^{2} \ 
=\ \pdo{t}g(F_{s}\,,\,F_{s}) \ 
=\ 2\,g\left(\nabla_{F_{t}}F_{s}\,,\,F_{s}\right) \ 
=\ g\left(\nabla_{F_{s}}F_{t}\,,\,F_{s}\right) \ 
=\ \pdo{t}g(F_{t}\,,\,F_{s}) \ -\ g\left(F_{t}\,,\,\nabla_{F_{s}}F_{s}\right)  \ 
=\ 0 . $$
\\

Now, in the following proposition we characterize all possible rulling vector fields $F_{s}$ of the othogonal ruled surfaces of $\Sol(3) $.
\\
\begin{proposition}\label{Prop1 Sol3 gen}
	Let  $F:(t_{1}\,,\,t_{2})\times (s_{1}\,,\,s_{2})\to \Sol(3) $ be an orthogonal ruled surface with ruling geodesic curve $s\mapsto F(t\,,\,s) $, then $F_{s}\,=\,TF(\partial_{s}) $ can only assume two forms:
	\\\\
	(1) $$F_{s,1}\ 
	=\ u_{1} $$
	(2)
	$$F_{s,2}\ 
	=\ -\tanh \left(\omega\right) \, u_{1}\ +\ 
	\text{sech}\left(\omega\right)\,
	\cos\left(\sigma\right)\,u_{2} \ +\ 
	\text{sech}\left(\omega\right)\, 
	\sin\left(\sigma\right)\,u_{3} $$
	where $$\omega \ =\ c_{1}(t)+\frac{p s}{\varepsilon }\ , \ 
	\sigma\ =\ \frac{1}{p}\log\left(\cosh\left(\omega\right)\right) + \theta (t)\ \text{ and }\ c_{1}(t)\,,\,\theta(t) \text{ are arbitrary smooth functions. } $$
\end{proposition}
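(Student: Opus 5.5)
The plan is to write out the geodesic condition \eqref{Ruled Cond}, $\nabla_{F_s}F_s=0$, in the orthonormal frame $\{u_1,u_2,u_3\}$, using the covariant derivatives of Subsection \ref{Covariant der}. The result is the autonomous system \eqref{general V2 eq} in the unknowns $a=F_s^1$, $b=F_s^2$, $c=F_s^3$, with $t$ entering only as a parameter. Expanding $\nabla_{F_s}F_s=\sum_{i,k}F_s^kF_s^i\nabla_{u_k}u_i+\sum_j\partial_sF_s^j\,u_j$ with the table of Subsection \ref{Covariant der}, I expect
\begin{equation*}
\partial_s a+\frac{p}{\varepsilon}(b^2+c^2)=0,\qquad
\partial_s b-\frac{1}{\varepsilon}ac-\frac{p}{\varepsilon}ab=0,\qquad
\partial_s c+\frac{1}{\varepsilon}ab-\frac{p}{\varepsilon}ac=0 .
\end{equation*}

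Orthogonality enters only through the fact, shown just before the statement, that $\|F_s\|_g$ is constant in both $s$ and $t$. After a constant rescaling of $s$ I normalize $a^2+b^2+c^2=1$; this is implicit in the statement, since both listed forms are unit vectors. The first equation then becomes the scalar Riccati equation $\partial_s a=-\frac{p}{\varepsilon}(1-a^2)$, where $p\neq0$ is used.

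I split into cases by uniqueness for this ODE. In the first case $a\equiv\pm1$, which forces $b=c=0$, so $F_s=\pm u_1$. The sign $-1$ is absorbed by reversing the ruling parameter $s\mapsto -s$, giving form (1). In the second case $|a|<1$ everywhere. Separating variables gives $a=-\tanh(\omega)$ with $\omega=c_1(t)+ps/\varepsilon$, where the integration constant may depend on $t$; one checks directly that $\partial_s(-\tanh\omega)=-\frac{p}{\varepsilon}\operatorname{sech}^2\omega$.

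Next I pass to the complex variable $z=b+ic$. The last two equations combine to $\partial_s z=\frac{a}{\varepsilon}(p-i)z$. The modulus satisfies $\partial_s|z|=\frac{pa}{\varepsilon}|z|$, which is consistent with $|z|^2=1-a^2$, so $|z|=\operatorname{sech}\omega$. The argument $\sigma$ satisfies $\partial_s\sigma=-a/\varepsilon=\tanh(\omega)/\varepsilon$. Changing variable via $d\omega=(p/\varepsilon)\,ds$ integrates this to $\sigma=\frac1p\log\cosh\omega+\theta(t)$, again with a $t$-dependent constant. This yields form (2).

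The main point needing care is the case split, namely that $a$ cannot pass between the two regimes. This follows from uniqueness for the Lipschitz Riccati equation: the equilibria $a=\pm1$ are invariant, so if $|a|=1$ at one point then $a\equiv\pm1$ on the whole ruling. I also need to check that $\theta(t)$ and $c_1(t)$ may be chosen smoothly in $t$. This follows because $F_s$ is smooth in $(t,s)$, so $c_1(t)=\operatorname{artanh}(-a(t,s_0))-ps_0/\varepsilon$ and the argument of $z(t,s_0)$ at a fixed $s_0$ are smooth functions of $t$.
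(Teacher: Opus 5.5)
Your proposal is correct and follows essentially the same route as the paper. You write $\nabla_{F_s}F_s=0$ in the frame $\{u_1,u_2,u_3\}$, solve the Riccati equation for the $u_1$-component to get $-\tanh\omega$, and integrate the modulus and argument of the $(u_2,u_3)$-part; your $z=b+ic$ is the paper's $Q\cos\sigma+iQ\sin\sigma$. You reach the correct $\sigma=\frac{1}{p}\log\cosh\omega+\theta(t)$, whereas the paper's proof has a typo $\frac{1}{\varepsilon}$ in its last line.

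You are more careful than the paper about the normalization of $\|F_s\|_g$, the case $a\equiv -1$, and the uniqueness argument separating the two regimes. That argument works ruling by ruling, though, so (as in the paper) the two forms really hold only on $t$-intervals where one regime persists. For example, rulings with $F_s=u_1$ for $t\le 0$ can be smoothly glued to a type (2) family with $c_1(t)\to-\infty$ as $t\to 0^+$.
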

\hfil
\begin{proof}
	Since $\|F_{t}\|\,=\,1 $, let us take
	$$F_{s}^{1}\,=\,P(t\,,\,s) \ ,\ \ 
	F_{s}^{2}\,=\,Q(t\,,\,s)\,\cos(\sigma(t\,,\,s)) \ ,\ \ 
	F_{s}^{3}\,=\,Q(t\,,\,s)\,\sin(\sigma(t\,,\,s)) 
	\   \text{ , where }\ 
	P^{2}\,+\,Q^{2}\,=\,1 , $$  
	then, substituting these into the equation $\nabla_{F_{s}}F_{s}=0$ in the frame $ \{u_{1},u_{2},u_{3}\}$, we obtain the system
	$$\left\{
	\begin{aligned}
	&\pd{P}{s}\ +\  \frac{p Q^2}{\varepsilon } \ =\ 0 &(\RomanNum{1}) \\\\
	&-\frac{p}{\varepsilon } P Q\cos (\sigma ) 
	\ -\ \frac{1}{\varepsilon}P Q \sin (\sigma )\ +\ 
	\pd{Q}{s} \cos (\sigma ) \ -\ 
	Q \pd{\sigma}{s} \sin (\sigma ) \ =\ 0  &(\RomanNum{2}) \\\\
	&-\frac{ p}{\varepsilon }P Q \sin (\sigma ) 
	\ +\ \frac{1}{\varepsilon}P Q \cos (\sigma )\ +\ 
	\pd{Q}{s}\sin(\sigma)\ +\ 
	Q \pd{\sigma}{s}\cos(\sigma ) \ =\ 0  &(\RomanNum{3}) 
	\end{aligned}\right.$$
	\\\\
	(1) It is trivial that, if we choose $P\,=\,1 $ and $Q\,=\,0 $, the system above is satisfied.
	\\\\
	(2) Since $P^{2}\,+\,Q^{2}\,=\,1 $, then, from $(\RomanNum{1})$,
	$$\pd{P}{s}\ =\  \frac{p}{\varepsilon}\,\left(P^{2}\,-\,1\right) \ \ 
	\Rightarrow\ \ \frac{p}{\varepsilon}\,s \ =\ \int\frac{dP}{P^{2}-1} \ 
	=\ -\tanh^{-1}(P)\ -\ c_{1}(t)\  
	\Rightarrow\ \ P \ =\  -\,\tanh\left(\frac{p}{\varepsilon}\,s\,+\,c_{1}\right) .
	$$
	So, $$Q^{2}\,=\,1\,-\,P^{2} \ 
	=\ 1-\tanh^{2}\left(\frac{p}{\varepsilon}\,s\,+\,c_{1}\right) \ 
	=\ sech^{2}\left(\frac{p}{\varepsilon}\,s\,+\,c_{1}\right) \ \ 
	\Rightarrow\ \ Q\ =\ sech\left(\frac{p}{\varepsilon}\,s\,+\,c_{1}\right) .  $$
	
	Now, notice that
	$$P^{2}\ +\ Q^{2}\, =\, 1 \ \ 
	\Rightarrow\ \ P\,\pd{P}{s}\ +\ Q\,\pd{Q}{s}\, =\, 0 \ \ 
	\Rightarrow\ \ -\frac{p}{\varepsilon}\,P\,Q^{2}\ +\ Q\,\pd{Q}{s}\, =\, 0 \ \ 
	\Rightarrow\ \ \pd{Q}{s}\,=\,\frac{p}{\varepsilon}\,P\,Q . $$
	
	Then, plugging this value of the derivative of Q back into $(\RomanNum{2})$ and $(\RomanNum{3})$ we obtain that
	
	$$\left\{\begin{aligned}
		&-\, Q\,\sin(\sigma)\,\pd{\sigma}{s} \ -\ \frac{1}{\varepsilon}PQ\,\sin(\sigma) \ =\ 0 &(\RomanNum{4}) \\\\[-10pt]
		&Q\,\cos(\sigma)\,\pd{\sigma}{s} \ +\ \frac{1}{\varepsilon}\,P\,Q\,\cos(\sigma) \ =\ 0 &(\RomanNum{5})
	\end{aligned}\right. . $$
	\\[-10pt]
	
	Now, calculating $\ (\RomanNum{4})\,(-\sin(\sigma))\ +\ (\RomanNum{5})\,\cos(\sigma) $ we will end up with
	$$Q\pd{\sigma}{s} \ -\ \frac{q}{\varepsilon}\,P\,Q\ =\ 0 \ \ 
	\Rightarrow\ \ \pd{\sigma}{s} \ =\ -\frac{1}{\varepsilon}\,P \ 
	=\ \frac{1}{\varepsilon}\,\tanh\left(\frac{p}{\varepsilon}\,s\,+\,c_{1}\right) , $$
	finally, integrating the equation we obtain 
	$$\sigma(t\,,\,s) \ =\ \frac{1}{\varepsilon}\,\log\left(\cosh\left(\frac{p}{\varepsilon}\,s\,+\,c_{1}\right)\right) \ +\ \theta(t) $$
	
\end{proof}

For simplicity, let $M = F((t_1, t_2) \times (s_1, s_2))$ denote our parametrized surface. We now wish to define an orthonormal frame $\{F_s, Y, W\}$ along $M$ (not necessarily adapted to the surface) such that the components of $Y$ and $W$ are functions of $c_1$ and $\theta$. Since $F_t$ and $F_s$ are mutually orthogonal, $F_t$ must lie entirely within the span of $\{Y, W\}$. This conveniently reduces the number of component functions needed to express $F_t$ from three (as would be required in the standard basis $\{u_1, u_2, u_3\}$) to just two.
\\

With that objective in mind, we first notice that, if we define $Y_{1}\,,\,Y_{2}\in \Gamma_{M}(\Sol(3))$ such that $$Y_{1}=u_{2} \ \text{ and }\ Y_{2}  = \cos\left(\sigma\right)\,u_{2}\, -\, 
\sin\left(\sigma\right)\,u_{3} , \text{ with \(\sigma\) defined as in Proposition\ref{Prop1 Sol3 gen}.} $$ 

It is straightforward to verify that $Y_{1}$ and $Y_{2}$ are orthogonal to the first and second forms of $F_{s}$ from Proposition \ref{Prop1 Sol3 gen}, respectively. Consequently, to obtain the third basis element $W$, it suffices to take the cross product of $F_{s}$ and $Y$.
$$W_{1} = F_{s,1}\times Y_{1} 
= u_{3} \ \text{ and }\ 
W_{2} = F_{s,2}\times Y_{2}  
= -\text{sech}\left(\omega\right)\,u_{1}\ -\ 
\tanh \left(\omega\right)\,
\cos \left(\sigma\right)\,u_{2}\ +\ 
\tanh \left(\omega\right)\,
\sin \left(\sigma\right)\,u_{3} \\
 $$
\\[-10pt]

Since $F_{t}$ and $F_{s}$ are taken as orthogonal, we must have that $F_{t} $ lies in the space spanned by $W $ and $Y$, meaning that there exist $f_{1}\,,\,h_{1} \in C^{\infty}((t_{1}\,,\,t_{2})\times (s_{1}\,,\,s_{2})) $ such that
$$F_{t,1} \ =\ f_{1}(t\,,\,s)\,Y_{1}\ +\ h_{2}(t\,,\,s)\,W_{1} \ 
=\ f_{1}(t\,,\,s)\,u_{2}\ +\ h_{1}(t\,,\,s)\,u_{3} , $$
\\
and there exists $f_{2},h_{2}\in C^{\infty}((t_{1}\,,\,t_{2})\times (s_{1}\,,\,s_{2}))$ such that
$$ F_{t,2} \ =\ f_{2}Y_{2}\, +\, h_{2}\,W_{2}  
= -h\,\text{sech}\left(\omega\right)\,u_{1}
\, +\, 
\left(f\,\sin \left(\sigma\right) -
h\,\tanh \left(\omega\right) 
\cos \left(\sigma\right)\right) \,u_{2} 
\, -\, 
\left(h\,\tanh\left(\omega\right)\,
\sin \left(\sigma\right) +
f\,\cos \left(\sigma\right)\right)\,u_{3} $$
\\[-10pt]

Now we are in position to apply the integrability equation $\nabla_{F_{t}}F_{s} - \nabla_{F_{s}}F_{t} $, which will allow us to obtain all possible vectors $F_{t} $ and, therefore, characterize all tangent frames $\{F_{t}\,,\,F_{s}\}$ taht a parametrized orthogonal ruled surfaces in $\Sol(3)$ can possess. This is done in the following theorem.
\begin{theorem}\label{Th1 Sol3 gen}
	Let $F:(t_{1}\,,\,t_{2})\times (s_{1}\,,\,s_{2})\to \Sol(3)$ be an orthogonal ruled surface with ruling geodesics $s\mapsto F(t\,,\,s) $, then the frames $F_{t}\,=\,TF(\partial_{t}) $ and $F_{s}\,=\,TF(\partial_{s})$ must have one of the two following forms:
	\\\\
	(1) $$F_{s,1} = u_{1} 
	\ \ \ and\ \ \ 
	F_{t,1}\ =\ 
	e^{-\frac{p}{\varepsilon}s}\,\left(a_{1}(t)\,
	\cos\left(\frac{s}{\varepsilon}\right) 
	 + a_{2}(t)\,\sin\left(\frac{s}{\varepsilon}\right)\right)\,u_{2}\ +\ 
	e^{-\frac{p}{\varepsilon}s}\,
	\left(a_{2}(t)\,
	\cos\left(\frac{s}{\varepsilon}\right) 
	 - a_{1}(t)\,\sin\left(\frac{s}{\varepsilon}\right)
	\right)\,u_{3}
	 $$
	 \\
	 where $a_{1}(t)\,,\,a_{2}(t)$ are arbitrary smooth functions and the column matrix is expressed in the basis $\{u_{1}\,,\,u_{2}\,,\,u_{3}\} $.
	 \\\\
	 (2) $$F_{s,2}\ 
	 	=\ -\tanh \left(\omega\right) \, u_{1}\ +\ 
	 	\text{sech}\left(\omega\right)\,
	 	\cos\left(\sigma\right)\,u_{2} \ +\ 
	 	\text{sech}\left(\omega\right)\, 
	 	\sin\left(\sigma\right)\,u_{3} , $$ 

	 	$$F_{t,2} \ 
	 	=\ \left(\begin{gathered}
	 		-b_{2}\,-\, \frac{\varepsilon}{p}\,c_{1}'\,
	 		\tanh\left(\omega\right) 
			\\\\[-5pt]
	 		\left(-b_{2}\,\sinh\left(\omega\right) \,-\, \frac{\varepsilon}{p}\,c_{1}'\,
	 		\sinh\left(\omega\right)\,
	 		\tanh\left(\omega\right)\right)\,
	 		\cos\left(\sigma\right)\,-\, 
	 		\left(-b_{1}\,\cosh(\omega) \,+\, 
	 		\frac{\varepsilon\theta'-1}{p}\,\sinh(\omega)\right)\,\sin(\sigma) \\\\[-5pt]
	 		\left(-b_{1}\,\cosh(\omega) \,+\, 
	 		\frac{\varepsilon\theta'-1}{p}\,\sinh(\omega)\right)\,\cos\left(\sigma\right)\,-\, 
	 		\left(b_{2}\,\sinh\left(\omega\right) \,+\, \frac{\varepsilon}{p}\,c_{1}'\,
	 		\sinh\left(\omega\right)\,
	 		\tanh\left(\omega\right)\right)\,\sin(\sigma)
	 	\end{gathered}\right) $$
	 	\\
	 	where 
	 	$$\omega\ =\ \frac{p}{\varepsilon}s\,+\,c_{1} \ ,\ \ 
	 	\sigma\ =\ \frac{1}{p}\,\log\left(\cosh\left(\frac{p}{\varepsilon}s\,+\,c_{1}\right)\right) \,+\,\theta\ ,\ \ c_{1}(t)\,,\,\theta(t)\,,\,b_{1}(t)\,,\,b_{2}(t)\ \text{ are arbitrary smooth functions and } $$
	 	the column matrix is expressed in the basis $\{u_{1}\,,\,u_{2}\,,\,u_{3}\} $.
\end{theorem}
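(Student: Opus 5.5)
The plan is to feed the two possible ruling fields of Proposition \ref{Prop1 Sol3 gen} into the integrability condition
$$\nabla_{F_{t}}F_{s}\,-\,\nabla_{F_{s}}F_{t}\,=\,0$$
from Section \ref{Sec 3}. In each case we write $F_t$ in the adapted frame $\{Y,W\}$ built above. Expanding $\nabla$ with the table of Section \ref{Covariant der} turns the condition into a linear first-order system in $s$ for the two unknown coefficients $(f,h)$. The functions of $t$ alone ($c_1$, $\theta$ and integration constants) then enter only through $\partial_t F_s$. Orthogonality $g(F_t,F_s)=0$ is automatic by construction. The constancy of $\|F_s\|$ proved before Proposition \ref{Prop1 Sol3 gen} guarantees that the $F_s$-component of the condition is consistent.

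\textbf{Case (1).} Take $F_s=u_1$ and $F_t=f\,u_2+h\,u_3$. Since $\partial_t F_s=0$, the condition reduces to $\nabla_{F_t}u_1=\nabla_{u_1}F_t$ (with $\nabla_{u_1}F_t$ including the $\partial_s$ derivatives of the coefficients). From the table:
\begin{itemize}
\item $\nabla_{F_t}u_1=-\frac{p}{\varepsilon}(f\,u_2+h\,u_3)$;
\item $\nabla_{u_1}F_t=(f_s-\frac{h}{\varepsilon})\,u_2+(h_s+\frac{f}{\varepsilon})\,u_3$.
\end{itemize}
This gives the linear system
$$f_s=-\tfrac{p}{\varepsilon}f+\tfrac{1}{\varepsilon}h,\qquad h_s=-\tfrac{p}{\varepsilon}h-\tfrac{1}{\varepsilon}f,$$
a constant-coefficient system whose matrix has eigenvalues $(-p\pm i)/\varepsilon$. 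Writing $z=f+ih$, it becomes $z_s=\frac{-p-i}{\varepsilon}z$, so $z=(a_1+ia_2)e^{-ps/\varepsilon}e^{-is/\varepsilon}$. Taking real and imaginary parts yields exactly the stated $F_{t,1}$, with $a_1(t),a_2(t)$ arbitrary.

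\textbf{Case (2).} Here $F_s=F_{s,2}$ and $F_t=f\,Y_2+h\,W_2$. We use $\omega_s=p/\varepsilon$, $\sigma_s=\frac{1}{\varepsilon}\tanh\omega$, $\omega_t=c_1'$ and $\sigma_t=\frac{1}{p}\tanh(\omega)c_1'+\theta'$. The steps are:
\begin{itemize}
\item Compute $\partial_t F_s$ in the $u$-frame.
\item Compute the bilinear terms $\sum F_t^iF_s^k\nabla_{u_k}u_i$ and $\sum F_s^kF_t^i\nabla_{u_k}u_i$ from the connection table.
\item Project the resulting vector equation onto $F_s$, $Y_2$ and $W_2$.
\end{itemize}
The $F_s$-projection should vanish identically. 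The $Y_2$- and $W_2$-projections give two linear ODEs in $s$ of the form $f_s=\alpha(\omega)f+\beta(\omega)h+\text{(forcing in }\theta')$ and $h_s=\gamma(\omega)f+\delta(\omega)h+\text{(forcing in }c_1')$. We expect the rotation terms in $\sigma$ to cancel, because $Y_2$ and $W_2$ co-rotate with $F_s$, leaving coefficients depending only on $\omega$.

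Changing variable from $s$ to $\omega$ (via $ds=\frac{\varepsilon}{p}d\omega$), we solve first the homogeneous system, whose solutions should be $h\propto\cosh\omega$-type and $f\propto\cosh\omega$-type combinations. We then find particular solutions by undetermined coefficients with the ansatz $f,h\in\mathrm{span}\{\cosh\omega,\sinh\omega,\sinh\omega\tanh\omega,1\}$ times $c_1'$ or $(\varepsilon\theta'-1)/p$. The integration constants are the arbitrary $b_1(t),b_2(t)$. Finally we recombine $f\,Y_2+h\,W_2$ in the $u$-frame to match the column in the statement.

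The main obstacle is the Case (2) projection step: tracking the many $\mathrm{sech}$, $\tanh$ and $\sigma$-dependent terms so that the trigonometric factors cancel and the system decouples into the clean form above. Guessing the right particular solution (in particular the $\frac{\varepsilon\theta'-1}{p}\sinh\omega$ term, whose $-1$ comes from the $\nabla_{u_1}u_2=\frac1\varepsilon u_3$ rotation) is the delicate point. I would verify the result by substituting it back into the integrability equation.
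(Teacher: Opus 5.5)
Your route is the paper's. In case (1) you land on exactly its system $\partial_s f_1+\frac{p}{\varepsilon}f_1-\frac{1}{\varepsilon}h_1=0$, $\partial_s h_1+\frac{1}{\varepsilon}f_1+\frac{p}{\varepsilon}h_1=0$, and solving via $z=f+ih$ is a tidy way to get the stated $F_{t,1}$. In case (2) the paper likewise writes $F_t=f_2Y_2+h_2W_2$, expands $\nabla_{F_t}F_s-\nabla_{F_s}F_t=0$, combines components into two first-order linear ODEs in $s$, and solves them. You are right that the $F_s$-projection is automatic and that the $\partial_s\sigma$-terms cancel (since $\partial_s\sigma=\frac{1}{\varepsilon}\tanh\omega$). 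One small caveat: use $Y_2=\sin(\sigma)\,u_2-\cos(\sigma)\,u_3$, which is what the paper's formula for $F_{t,2}$ actually uses. The displayed $Y_2=\cos(\sigma)\,u_2-\sin(\sigma)\,u_3$ has $g(Y_2,F_{s,2})=\mathrm{sech}(\omega)\cos(2\sigma)$, so with it orthogonality is not automatic.

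The real problem is the target you are aiming for. Carrying out your case (2) computation, the $u_1$-component (equivalently the $W_2$-projection) gives $\partial_s\big(h\,\mathrm{sech}(\omega)\big)=c_1'\,\mathrm{sech}^2(\omega)$, so $h=b_2\cosh(\omega)+\frac{\varepsilon}{p}c_1'\sinh(\omega)$. The $Y_2$-projection gives
\[
\partial_s f-\frac{p}{\varepsilon}\tanh(\omega)\,f=\frac{1}{\varepsilon}\,\mathrm{sech}^2(\omega)\,h-\mathrm{sech}(\omega)\Big(\theta'+\frac{1}{p}\,c_1'\tanh(\omega)\Big).
\]
After substituting $h$ the $c_1'$-terms cancel, and the integrating factor $\mathrm{sech}(\omega)$ yields $f=b_1\cosh(\omega)+\frac{b_2-\varepsilon\theta'}{p}\sinh(\omega)$. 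So the coefficient is $\frac{\varepsilon\theta'-b_2}{p}$. This is exactly what the paper's proof obtains, $f_2=\frac{1}{p}b_2\sinh\omega+b_1\cosh\omega-\frac{\varepsilon}{p}\theta'\sinh\omega$; the ``$-1$'' in the displayed theorem is a misprint for ``$-b_2$''.

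The term does originate in the rotation $\nabla_{u_1}u_2=\frac{1}{\varepsilon}u_3$, but it enters multiplied by $F_t^1=-h\,\mathrm{sech}(\omega)$. It is the response of $f$ to the homogeneous part $b_2\cosh\omega$ of $h$, not a forced particular solution: every inhomogeneity comes from $\partial_tF_s$ and is proportional to $c_1'$ or $\theta'$. Accordingly the homogeneous solutions are $h=C\cosh\omega$, $f=K\cosh\omega+\frac{C}{p}\sinh\omega$, not purely $\cosh$-type.

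An ansatz built around $(\varepsilon\theta'-1)/p$ will not close, and your back-substitution check will fail against the displayed column unless $b_2\equiv 1$. For instance, with $c_1,\theta$ constant and $b_1=b_2=0$, the displayed $F_{t,2}=\frac{1}{p}\sinh(\omega)\big(\sin(\sigma)\,u_2-\cos(\sigma)\,u_3\big)$ violates the integrability condition. Aim for the $b_2$ version instead.
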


\begin{proof}
\hfil
\\\\
(1) As was proven in Proposition \ref{Prop1 Sol3 gen} and discussed above, 
$$F_{s,1}\ =\ u_{1}(t\,,\,s) \ \ \ and\ \ \ 
F_{t,1} \ 
=\ f_{1}\,u_{2}\ +\ h_{1}\,u_{3} $$

Then, expanding the integrability equation $\nabla_{F_{t}}F_{s}\,-\,\nabla_{F_{s}}F_{s}\,=\,0 $ in the basis $\{u_{1}\,,\,u_{2}\,,\,u_{3}\}$ we obtain
$$\pd{f_{1}}{s} \ +\ \frac{p}{\varepsilon}\,f_{1} \ -\ 
	\frac{1}{\varepsilon}\,h_{1} \ =\ 0 \ \text{ and }\ 
\pd{h_{1}}{s} \ +\ \frac{1}{\varepsilon}\,f_{1} \ +\ 
	\frac{p}{\varepsilon}\,h_{1} \ =\ 0 $$
\\
which is a linear equation with general solution:
$$f_{1}(t\,,\,s) \ 
	=\ e^{-\frac{p}{\varepsilon}s}\,
	\left(a_{1}(t)\,\cos\left(\frac{s}{\varepsilon}\right) 
	\ +\ a_{2}(t)\,\sin\left(\frac{s}{\varepsilon}\right)\right) \ \text{ and }\ 
h_{1}(t\,,\,s) \ 
	=\ e^{-\frac{p}{\varepsilon}s}\,
	\left(a_{2}(t)\,	\cos\left(\frac{s}{\varepsilon}\right) 
	\ -\ a_{1}(t)\,\sin\left(\frac{s}{\varepsilon}\right)\right) , $$
\\
where $a_{1}(t)\,,\,a_{2}(t)$ are arbitrary smooth functions.
\\\\
(2) As was proven in Proposition(\ref{Prop1 Sol3 gen}) and discussed above, 
$$F_{s,2}\ 
	=\ -\tanh \left(\omega\right) \, u_{1}\ +\ 
	\text{sech}\left(\omega\right)\,
	\cos\left(\sigma\right)\,u_{2} \ +\ 
	\text{sech}\left(\omega\right)\, 
	\sin\left(\sigma\right)\,u_{3} \ \text{ and }$$
\\[-10pt]
$$F_{t,2}\ =\ -h_{2}\,\text{sech}\left(\omega\right)\,u_{1}\, +\, 
f_{2}\,\sin \left(\sigma\right)\, -\, 
h_{2}\,\tanh \left(\omega\right) 
\cos \left(\sigma\right) \,u_{2} \, -\, 
h_{2}\,\tanh\left(\omega\right)\,
\sin \left(\sigma\right) -
f_{2}\,\cos \left(\sigma\right)\,u_{3}. $$
\\[-10pt]

Then, expanding the integrability equation $\nabla_{F_{t}}F_{s}\,-\,\nabla_{F_{s}}F_{s}\,=\,0 $ in the basis $\{u_{1}\,,\,u_{2}\,,\,u_{3}\}$ we obtain

$$\left\{\begin{aligned}
&\pd{h_{2}}{s}\,Q\ +\ h_{2}(t,s)\,\frac{p}{\varepsilon}\,PQ\ +\ \pd{P}{t}  &(\RomanNum{1})
\\\\
&\left( - \frac{p}{\varepsilon}h_{2} - P\pd{h}{s}  -  \pd{P}{s}h_{2}  + \frac{p}{\varepsilon}\,PQ\right)\,\cos(\sigma) + 
\left(-\frac{p}{\varepsilon}Pf_{2} + \frac{1}{\varepsilon}h_{2} - \pd{f_{2}}{s} + \frac{q}{\varepsilon}P^{2}h_{2}   + Q\frac{1}{\varepsilon}P\right)\,\sin(\sigma)  &(\RomanNum{2})
\\\\
&\left(\frac{p}{\varepsilon}Pf_{2} - \frac{1}{\varepsilon}h_{2} + \pd{f_{2}}{s} - \frac{q}{\varepsilon}P^{2}h_{2}   - \frac{1}{\varepsilon}PQ\right)\,\cos(\sigma) \ +\ 
\left(\frac{p}{\varepsilon}h_{2} + P\pd{h_{2}}{s} +  \pd{P}{s}h_{2}  - \frac{p}{\varepsilon}\,PQ\right)\,\sin(\sigma)  &(\RomanNum{3})
\end{aligned}\right. $$
\\
where 
$$P\ =\ - \tanh\left(\frac{p\,s}{\varepsilon}+ c_{1}(t)\right) \ \ ,\ \ \ 
Q\ =\ sech\left(\frac{p\,s}{\varepsilon}\ +\ c_{1}(t)\right) \ \ ,\ \ \ 
\sigma(t\,,\,s)\ 
=\ \theta(t)\ +\ \frac{1}{p}\,\log\left(\cosh\left(\frac{p\,s}{\varepsilon}\ +\ c_{1}(t)\right)\right) .  $$
\\[-7pt]

Calculating $(\RomanNum{2})\,\cos(\sigma)\,-\,(\RomanNum{3})\,\sin(\sigma) $ and $-(\RomanNum{2})\,\sin(\sigma)\,+\,(\RomanNum{3})\,\sin(\sigma)\,\cos(\sigma) $, we, respectively, get
$$
\frac{p}{\varepsilon}h_{2} + P\pd{h_{2}}{s} +  \pd{P}{s}h_{2}  - \frac{p}{\varepsilon}\,PQ\ =\ 0 \ \text{ and }\ 
\frac{p}{\varepsilon}Pf_{2} - \frac{1}{\varepsilon}h_{2} + \pd{f_{2}}{s} + \frac{1}{\varepsilon}P^{2}h_{2}   - \frac{1}{\varepsilon}PQ \ =\ 0 . $$
\\[-7pt]

Then, the general solution is
$$f_{2}(t,s)\ 
=\ \frac{1}{p}b_{2}(t)\,\sinh\left(c_{1}(t)+\frac{p s}{\varepsilon}\right) \ +\ 
b_{1}(t)\,
\cosh\left(c_{1}(t)+\frac{p s}{\varepsilon}\right) \ -\ 
\frac{\varepsilon}{p}\theta '(t)\,
\sinh\left(c_{1}(t)+\frac{p s}{\varepsilon }\right) $$
\\
$$h_{2}(t,s)\ 
=\ b_{2}(t) \cosh\left(c_{1}(t)+\frac{p s}{\varepsilon}\right)\ +\ 
\frac{\varepsilon}{p}c_{1}'(t) \sinh \left(c_{1}(t)+\frac{p s}{\varepsilon }\right) $$

\end{proof}

\subsection{The parametrized orthogonal ruled surfaces}\label{Ruled surf Sol3 gen}
As described in Section \ref{Sec 3}, we obtained the tangent frames $\{F_t, F_s\}$ using only the Lie algebra of $\Sol(3)$, without referencing its product structure. And, if one desires, this is enough to also characterize the minimal orthogonal ruled surfaces, as computing the mean curvature requires only the tangent vectors. However, our goal here is to also obtain the explicit parametrization of these surfaces in the canonical global chart. For that we  first expand our orthonormal left invariant frame in the canonical global chart of $\Sol(3) $, obtaining
$$u_{1} \ 
=\ \frac{1}{\varepsilon}\pdo{x}\,,\  u_{2}\ 
=\ \frac{1}{\alpha}e^{px}\,\cos(x)\,\pdo{y}\ +\ \frac{1}{\alpha}e^{px}\,\sin(x)\,\pdo{z} 
\, , \ 
u_{3}\ 
=\ -\frac{1}{\alpha}e^{px}\,\sin(x)\,\pdo{y}\ +\ \frac{1}{\alpha}e^{px}\,\cos(x)\,\pdo{z} . $$
\\

Then, expanding the formula for $F_{s}$ from Theorem \ref{Th1 Sol3 espec} in terms of this canonical basis yields
$$F_{s}\ 
=\ -\frac{1}{\varepsilon}\tanh\left(\omega\right)\,\pdo{x}
\ +\ 
\left( \frac{e^{p\,x(t,s)}}{\alpha}\,\text{sech}\left(\omega\right)\,
\cos\left(\sigma + x(t,s)\right) \right)\,\pdo{y}\ -\ 
\left( \frac{e^{p\,x(t,s)}}{\alpha}\,
\text{sech}\left(\omega\right)\,
\sin\left(\sigma + x(t,s) \right) 
\right)\,\pdo{z} $$
\\
and, since 
$$F_{s}\ 
=\ \pd{x}{s}\,\pdo{x}\ +\ \pd{y}{s}\,\pdo{y} \ +\ \pd{z}{s}\,\pdo{z} , $$
\\
then, by solving the system of equations, one obtains the general solution
$$F(t,s)\ 
=\ \left(
	A_{1} - 
	\frac{1}{p}\,\log(\cosh(\omega)) \ ,\ 
	A_{2} + 
	\frac{\varepsilon}{p\,\alpha}\,e^{p\,A_{1}}\,\cos\left(A_{1} + \theta\right)\,\tanh\left(\omega\right)
	\ ,\ 
	A_{3} + 
	\frac{\varepsilon}{p\,\alpha}\,e^{p\,A_{1}}\,\sin\left(A_{1} + \theta\right)\,\tanh\left(\omega\right)	
\right), $$
\\
where $A_{1}(t)\,,\,A_{2}(t)\,,\,A_{3}(t) $ are arbitrary smooth functions that come from the integration of the equations. However, as in this calculation we didn't reference the vector $F_{t}$, the family of parametrizations we have obtained is not necessarily orthogonal.  Therefore, in order to recover orthogonality of the tangent frame we have to relate the new functions, $A_{i} $'s, with the $b_{i}$'s. In order to do this, we just have to expand the derivative $\partial_{t}F$ obtained above in the basis $\{u_{1}\,,\,u_{2}\,,\,u_{3}\}$, subtract it from the expression  for $F_{t}$ found in Theorem \ref{Th1 Sol3 gen} and identify when the expression for this difference is null. Doing that for $F_{t,2} $ we get

$$A_{1}' \ =\ -\frac{1}{\varepsilon}\,b_{2}\ \text{ and }\ 
\left\{\begin{aligned}
	&A_{2}'  
	= \frac{e^{p\,A_{1}}}{\alpha}\,\left(-\,\frac{\varepsilon}{p}\,c_{1}'\,
	\cos\left(A_{1} + \theta\right) \ 
	+\ b_{1}\,\sin\left(A_{1} + \theta\right)\right) \\\\
	&A_{3}' 
	= -\frac{e^{p\,A_{1}}}{\alpha}\,\left(\frac{\varepsilon}{p}\,c_{1}'\,\sin\left(A_{1}\,+\,\theta\right)  + 
	b_{1}\,\cos\left(A_{1} + \theta\right)\right) .
\end{aligned}\right. $$
\\

Now, using the first formula for $F_{s}$ from Proposition \ref{Prop1 Sol3 gen} we will have
$$F_{s}\ 
=\ \frac{1}{\varepsilon} \pdo{x}\ \text{, meaning that }\ 
F(t\,,\,s)\ =\ \left(A_{1}(t)\ +\ \frac{1}{\varepsilon}\,s\ ,\ 
A_{2}(t)\ ,\ A_{3}(t)\right) . $$
\\[-7pt]

Repeating the previous process, the relations between the $A_{i}$'s and $a_{i}$'s will be
$$A_{1}'\ =\ 0 \ \text{ and }\ 
\left\{\begin{aligned}
	&A_{2}'\ =\ \frac{1}{\alpha}e^{p\,A_{1}}
	\left(a_{1}\,\cos\left(\frac{2}{\varepsilon}s\,-\,A_{1}\right) \ +\ 
	a_{2}\,\sin\left(\frac{2}{\varepsilon}s\,-\,A_{1}\right)\right) \\\\
	&A_{3}' \ =\ \frac{1}{\alpha}e^{p\,A_{1}}
	\left(a_{2}\,\cos\left(\frac{2}{\varepsilon}s\,-\,A_{1}\right) \ -\ 
	a_{1}\,\sin\left(\frac{2}{\varepsilon}s\,-\,A_{1}\right)\right) .
\end{aligned}\right.\, $$

From this system of equations and the expression of $F(t\,,\,s)$ it is clear that the only requirement to get the orthogonal surfaces is imposing $A_{1}'$ to be identically zero, meaning $A_{1}=R\in \mathbb{R} $.
\\

We summarize the results obtained in the following theorem:
\begin{theorem}\label{Ruled Surf Sol(3) geral}
    Let $F:(t_{1}\,,\,t_{2})\times (s_{1}\,,\,s_{2})\to \Sol(3)$ be an orthogonal ruled surface, then it must have one of the following two forms:
    \\
    (1) $$F(t,s)\ 
	=\ \scalebox{0.95}{\(\left(\begin{gathered}
			A_{1}(t)\ -\ 
			\frac{1}{p}\,\log\left(\cosh\left(\frac{p}{\varepsilon}s + c_{1}\right)\right) \\\\
			\int_{t_{0}}^{t}e^{p A_{1}}\,
			\left(-\frac{\varepsilon}{p \alpha}  c_{1}' 
				\cos\left(A_{1} + \theta\right) 
				+  \frac{b_{1}}{\alpha}   \sin\left(A_{1} + \theta\right)\right)\,dt
			 + 
			\frac{\varepsilon}{p \alpha} e^{p\,A_{1}} \cos\left(A_{1} + \theta\right) \tanh\left(\frac{p}{\varepsilon}s + c_{1}\right)
			\\\\
			-\int_{t_{0}}^{t}e^{p A_{1}}\,\left(\frac{\varepsilon}{p\alpha}  c_{1}' 
				\sin\left(A_{1} + \theta\right)   +
				\frac{b_{1}}{\alpha} \cos\left(A_{1}\,+ \theta\right)\right)\,dt
			 + 
			\frac{\varepsilon}{p \alpha} e^{p A_{1}} \sin\left(A_{1} + \theta\right) \tanh\left(\frac{p}{\varepsilon}s + c_{1}\right)
		\end{gathered}\right)\)}  , $$
\\
where $c_{1}(t)\,.\,\theta(t)\,,\,A_{1}(t)\,,\,b_{1}(t) $ are arbitrary smooth functions.
\\\\
(2)$$F(t\,,\,s)\ =\ \left(R\ +\ \frac{s}{\varepsilon}\ ,\ 
A_{2}(t)\ ,\ A_{3}(t)\right) ,$$
\\
where $R\in \mathbb{R}$ and $A_{2}(t)\,,\,A_{3}(t) $ are arbitrary smooth functions.

\end{theorem}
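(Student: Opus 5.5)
The plan is to integrate the ruling field $F_s$ from Theorem \ref{Th1 Sol3 gen} in the canonical global chart $(x,y,z)$ of $\Sol(3)$. We then match the $t$-derivative of the resulting map with the frame $F_t$ given in Theorem \ref{Th1 Sol3 gen}.

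First I express $u_1,u_2,u_3$ in the coordinate fields. From the group law with $\exp(Ax)$ a scaled rotation, we have $u_1=\frac{1}{\varepsilon}\partial_x$, and $u_2,u_3$ are $\frac{1}{\alpha}e^{px}$ times the rotation by angle $x$ applied to $\partial_y,\partial_z$. Writing $F=(x,y,z)$, the equation $F_s=\partial_s x\,\partial_x+\partial_s y\,\partial_y+\partial_s z\,\partial_z$ then becomes a triangular ODE system in $s$.

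\textbf{Case $F_s=-\tanh(\omega)u_1+\dots$.} The first equation is $\partial_s x=-\frac{1}{\varepsilon}\tanh(\omega)$, which gives
\[
x=A_1(t)-\tfrac1p\log\cosh(\omega).
\]
With this, $e^{px}=e^{pA_1}\operatorname{sech}(\omega)$ and $x+\sigma=A_1+\theta$, since the $\log\cosh$ terms cancel. The combined angle is therefore independent of $s$. The $y,z$ equations reduce to $\partial_s y=\frac{1}{\alpha}e^{pA_1}\operatorname{sech}^2(\omega)\cos(A_1+\theta)$, and similarly for $z$. These integrate to $\frac{\varepsilon}{p\alpha}e^{pA_1}\cos(A_1+\theta)\tanh(\omega)+A_2(t)$, and the analogous expression with $\sin$ plus $A_3(t)$.

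\textbf{Matching with $F_{t,2}$.} Next I compute $\partial_t F$ and convert it back to the frame $\{u_i\}$ via the inverse rotation. I subtract the $F_{t,2}$ of Theorem \ref{Th1 Sol3 gen} and require the difference to vanish identically in $s$. The $u_1$ component should give $A_1'=-b_2/\varepsilon$, using that the $c_1'$ contributions from $\partial_t\log\cosh$ match. The $u_2,u_3$ components, after separating the coefficients of the independent functions of $s$, should give the stated formulas for $A_2',A_3'$ in terms of $c_1',b_1,\theta$. Integrating these from $t_0$ produces the integrals in form (1).

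\textbf{Case $F_s=u_1$.} Here $x=A_1+s/\varepsilon$ and $y=A_2$, $z=A_3$. Matching with $F_{t,1}$, the $u_1$ component forces $A_1'=0$, so $A_1=R$ is constant. The remaining equations only determine $a_1,a_2$ from $A_2',A_3'$, so $A_2,A_3$ are free, which gives form (2).

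I expect the main obstacle to be the matching step in case (2). It requires verifying that every $s$-dependent term ($\sinh$, $\cosh$, $\tanh$ factors, after absorbing $e^{px}$ and the rotation by $x$) is consistent with the $F_{t,2}$ of Theorem \ref{Th1 Sol3 gen}, so that the constraints collapse to $t$-only ODEs for $A_i$. I would do this by multiplying through by $\cosh(\omega)$ and comparing coefficients of $\cosh\omega$ and $\sinh\omega$. If a residual $s$-dependence survives, this would indicate an extra relation among $b_1,b_2,c_1,\theta$; I would record that relation and absorb it into the choice of arbitrary functions.
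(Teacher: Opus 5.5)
Your proposal is correct and is essentially the paper's own argument. You integrate $F_s$ in the global chart, where $x+\sigma=A_1+\theta$ makes the angle independent of $s$, then match $\partial_t F$ against the frame $F_t$ of Theorem~\ref{Th1 Sol3 gen} to obtain $A_1'=-b_2/\varepsilon$ and the stated $A_2',A_3'$; in the $F_s=u_1$ case you read off $A_1'=0$ with $A_2,A_3$ free.

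The obstacle you anticipate does not materialize: every $\cosh\omega$ and $\sinh\omega$ coefficient matches. This holds provided you use $f_2=b_1\cosh\omega+\frac{b_2-\varepsilon\theta'}{p}\sinh\omega$ from the proof of Theorem~\ref{Th1 Sol3 gen}, not the coefficient $\frac{\varepsilon\theta'-1}{p}$ printed in its statement, which would spuriously force $b_2=1$.
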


\subsection{The Minimal Orthogonal Ruled Surfaces}\label{Min Sol3 gen}
All we have done so far is to find the ruled surface, not necessarily minimal, so now we have to find the relations that such surfaces must satisfy in order to achieve minimality. For that we use the values of $F_{t} $ and $F_{s}$ obtained in Theorem(\ref{Th1 Sol3 gen}) and the following expression for the mean curvature of orthogonal ruled surfaces:
$$	H \ 
=\  \frac{1}{\|F_{t}\wedge F_{s}\|_{g}^{3}}\,
g\left(\nabla_{F_{t}}F_{t} \ ,\  F_{t}\times F_{s}\right) . $$

In the first case of Theorem(\ref{Th1 Sol3 gen}) we have
$$\|F_{t}\times F_{s}\|_{g}^{3}H \ 
=\ -\,e^{-\frac{2p}{\varepsilon}s}\,
\left|\begin{matrix}
    a_{1} & a_{2} \\
    a_{1}' & a_{2}'
\end{matrix}\right| \ \ \ and\ \ \ 
\|F_{t}\times F_{s}\|_{g}^{2}\ 
=\ e^{-\frac{2p}{\varepsilon}s}\,\left(a_{1}^{2}\,+\, a_{2}^{2}\right) .$$
\\[-10pt]

For the right-hand side of the first equation to vanish while maintaining $\|F_{t}\times F_{s}\| \neq 0$, we require either that $a_2$ vanishes while $a_1$ does not, or that $a_1 = k a_2$ for some constant $k \in \mathbb{R}$, where $a_2$ is a nowhere-vanishing function. Substituting these restrictions for $a_1$ and $a_2$ into the parameterizations given in Theorem \ref{Ruled Surf Sol(3) geral}, we, respectively, obtain the following surfaces:

$$F(t,s)\ 
=\ \left(\frac{1}{\varepsilon}\,s\ +\ R\ ,\ 
\frac{1}{\alpha}e^{p\,R}\,
		\cos\left(\frac{2}{\varepsilon}\,s\,-\,R\right)\,f(t)\ ,\ 
-\frac{1}{\alpha}e^{p\,R}\,
		\sin\left(\frac{2}{\varepsilon}\,s\,-\,R\right)\,f(t)
\right)\ \text{ and } $$

$$F(t,s)\ 
=\ \left(
\frac{s}{\varepsilon} + R\ ,\ 
\frac{e^{p\,R}\,f(t)}{\alpha}
	\left(k\,\cos\left(\frac{2s}{\varepsilon} - R\right)  + 
	\sin\left(\frac{2s}{\varepsilon} - R\right)\right) \ ,\ 
\frac{e^{p\,R}\,f(t)}{\alpha} 
	\left(\cos\left(\frac{2s}{\varepsilon} - R\right)  -  k\,\sin\left(\frac{2s}{\varepsilon} - R\right)\right)
\right). $$
where $f(t)=\int a_{2}(r)\,dr $.
\\

Now we turn our attention to the second case of Theorem \ref{Th1 Sol3 gen}. In this situation we will have that
\begin{equation}\label{Min Cond Sol3 ort gen}
	\begin{aligned}
		&\|F_{t}\times F_{s}\|_{g}^{3}\,H\ = \\\\[-7pt]
		&\left(\frac{b_{2}b_{1}' - b_{2}'b_{1}}{2}  - 
		\varepsilon \frac{b_{2}c_{1}'' - b_{2}'c_{1}'}{2p^{2}}  + 
		\frac{\varepsilon b_{1}}{2p}\left((c_{1}')^{2} + (\theta')^{2}\right)  + 
		\varepsilon^{2}\frac{c_{1}''\theta' -c_{1}'\theta''}{2p^{2}}
		+ \frac{(p^{2}+1)b_{1}b_{2}^{2}}{2\varepsilon p}  + 
			    \frac{p b_{1}^{3}}{2\varepsilon}  -  
			    \frac{b_{2}b_{1}\theta'}{p}
			    \right)\,\cos(2\omega)  + \\\\[-5pt]
		&\left(\frac{\varepsilon}{2p}\left(\left(b_{2}'\theta' - b_{2}\theta''\right) \ +\  
			    \left(b_{1}'c_{1}' - b_{1}c_{1}''\right)\right)  + 
			    \frac{1}{\varepsilon}\left(b_{2}b_{1}^{2}\right)  + 
			    b_{1}b_{2}c_{1}' \ -\ b_{1}^{2}\theta'
			    \right)\,\sin(2\omega) +
		\left(\frac{b_{2}b_{1}' - b_{1}b_{2}'}{2}  - 
					    \frac{\varepsilon }{2p^{2}}\left(b_{2}'c_{1}' - b_{2}c_{1}''\right)  + 
					     \right. \\\\[-5pt]
		&\left. \frac{b_{2}b_{1}\theta' + b_{2}^{2}c_{1}'}{p}  +
		\frac{p^{2}-1}{2\varepsilon p}\left(b_{2}^{2}b_{1}\right)  - 
			    \frac{\varepsilon}{p}b_{2}c_{1}'\theta' -\ 
			    	    \frac{\varepsilon}{2p}\,b_{1}\,\left((\theta')^{2} + 3(c_{1}')^{2}\right) \ +\ 
			    	    \frac{p}{2\varepsilon}b_{1}^{3} \ +\ 
			    	    \frac{\varepsilon^{2}}{2p^{2}}\left(c_{1}'\theta'' - c_{1}''\theta'\right)\right) .
	\end{aligned}
\end{equation}
\\[-10pt]
Given the difficulty of solving such an equation, we shall simplify it by taking $ b_{1}=0 $. Doing that the expression above becomes
\\[-10pt]
$$\begin{aligned}
    &&\|F_{t}\times F_{s}\|^{3}_{g}\,H\ = 
    &-\left(\frac{\varepsilon}{2p^{2}}\,\left(b_{2}\,c_{1}''\,-\,b_{2}'\,c_{1}'\right) \ +\ 
    \frac{\varepsilon^{2}}{2p^{2}}\,\left(c_{1}''\,\theta'\,-\,c_{1}'\,\theta''\right)
    \right)\,\cos\left(2\,\omega\right) \ +\ 
    \frac{\varepsilon}{2p}\,\left(b_{2}'\,\theta'\,-\,b_{2}\,\theta''\right)\,
    \sin\left(2\,\omega\right) \ +\  \\\\
    &&&-\ \frac{b_{2}\,c_{1}'}{p}\left(-b_{2} \, +\, \varepsilon\,\theta'\right) \ -\ 
    \frac{\varepsilon}{2p^{2}}\,\left(b_{2}'\,c_{1}'\ -\ b_{2}\,c_{1}''\right) \ +\ 
    \frac{\varepsilon^{2}}{2p^{2}}\,\left(c_{1}'\,\theta''\ -\ \theta'\,c_{1}''\right) .
\end{aligned} $$
\\[-10pt]

Since $\cos(2\omega) $ and $\sin(2\omega) $ are linearly independent functions, in order for the expression for $\|F_{t}\times F_{s}\|^{3}_{g}\,H $ to vanish, we must require that the coefficients of both functions, together with the independent term to be identically zero. So, subtracting the term in front of $\cos(2\omega)$ from the independent one we get the following system of differential equations
$$\left\{\begin{aligned}
	&-\frac{\varepsilon}{2p^{2}}\,\left(b_{2}\,c_{1}''\,-\,b_{2}'\,c_{1}'\right) \ +\ 
	\frac{\varepsilon^{2}}{2p^{2}}\,\left(c_{1}''\,\theta'\,-\,c_{1}'\,\theta''\right)\ =\ 0 \ &(1) \\\\
	&b_{2}'\,\theta'\,-\,b_{2}\,\theta'' \ =\ 0 \ &(2)
	\\\\
	&\frac{b_{2}\,c_{1}'}{p}\left(-b_{2} \, +\, \varepsilon\,\theta'\right)  \ =\ 0 \ &(3) .
\end{aligned}\right.  $$
\\[-10pt]

Observing equation (3), we can see that there are three situations in which it vanishes: 
$$\theta' = \frac{1}{\varepsilon}\,b_{2}\ ,\ c_{1}\text{ is a constant function or }b_{2}\text{ vanishes everywhere.} $$

In what follows we study these three cases separately.
\\\\
\textbf{(Case \RomanNum{1})}\label{Case 1 geral} 
If $\ \theta'= (1/\varepsilon)\,b_{2} $, then not only we satisfy equation (3), but we also satisfy equations (1) and (2). Indeed, substituting our hypothesis into these equations yields
$$-\frac{\varepsilon}{2p^{2}}\,\left(b_{2}\,c_{1}''\,-\,b_{2}'\,c_{1}'\right) \ +\ 
\frac{\varepsilon^{2}}{2p^{2}}\,\left(c_{1}''\,\theta'\,-\,c_{1}'\,\theta''\right) \ =\ -\frac{\varepsilon}{2p^{2}}\,\left(b_{2}\,c_{1}''\,-\,b_{2}'\,c_{1}'\right) \ +\ 
\frac{\varepsilon }{2p^{2}}\,\left(c_{1}''\,b_{2}\,-\,c_{1}'\,b_{2}'\right) \ 
=\ 0\  \text{ and } $$

$$b_{2}'\,\theta'\,-\,b_{2}\,\theta'' \ 
=\ \frac{1}{\varepsilon}\left( b_{2}b_{2}'\,-\, b_{2}'b_{2}\right) \ =\ 0 . $$
\\[-10pt]

Plugging such restriction in the parametrization found in Theorem\ref{Ruled Surf Sol(3) geral}, we obtain the surface
$$F(t\,,\,s)\ =\ 
\left(
	A_{1}(t) \ -\ \frac{1}{p}\,\log\left(\cosh\left(\frac{p}{\varepsilon}s\,+\,c_{1}(t)\right)\right)  \ ,\ 
	\frac{\varepsilon}{p\alpha}\,e^{p\,A_{1}(t)}\,
	\tanh\left(\frac{p}{\varepsilon}s\,+\,c_{1}(t)\right) \ ,\ 
	-\,\frac{\varepsilon}{p\alpha}\int_{t_{0}}^{t}e^{p\,A_{1}(t)}\,c_{1}'(t)\,dt\right). $$

Also, since $$\|F_{t}\times F_{s}\|^{2}_{g}\ 
=\ \left(b_{2}\,\cosh\left(\frac{p}{\varepsilon}s\,+\,c_{1}(t)\right)\ +\ 
\frac{\varepsilon}{p}\,\sinh\left(\frac{p}{\varepsilon}s\,+\,c_{1}(t)\right) \right)^{2} , $$
in order for our surface to not degenerate into a curve, we will also require $b_{2}$ to be a non vanishing function. 
\\\\
\textbf{(Case \RomanNum{2})}\label{Case 2 geral} Suppose $c_{1}(t) = k \in \mathbb{R}$ for all $t \in (t_{1}, t_{2})$. While Equation (1) is trivially satisfied, Equation (2) requires either $\theta' = a\,b_{2}$ for some constant $a \in \mathbb{R}$, or that $b_{2}$ vanishes identically. This demonstrates that assuming $c_1$ to be constant relaxes the constraints on the system, indeed, rather than being restricted to $a = 1/\varepsilon$ as in Case I, the parameter $a$ can be an arbitrary real number. Consequently, we divide our analysis into two sub-cases:
\\\\
\textbf{(\RomanNum{2}.1)}\label{Case 2-1 geral} If $\theta'\,=\,a\,b_{2}\ $, where $a\in \mathbb{R} $, then the parametrization from Theorem \ref{Ruled Surf Sol(3) geral} gives the surface
$$F(t\,,\,s)\ =\ 
\left(
	A_{1} \ -\  \frac{1}{p}\,\log\left(\cosh\left(\omega_{2}\right)\right) \ ,\ 
	\frac{\varepsilon}{p\alpha}\,e^{p\,A_{1}}\,
	\cos\left((a\,\varepsilon-1)\,A_{1}\right)\,
	\tanh\left(\omega_{2}\right)\ ,\ 
	-\frac{\varepsilon}{p\alpha}\,e^{p\,A_{1}}\,
	\sin\left((a\,\varepsilon-1)\,A_{1}\right)\,
	\tanh\left(\omega_{2}\right)
	\right),
 $$
 \\
 where $\omega_{2}= (p/\varepsilon)s + k $.

Also, since
$$\|F_{t}\times F_{s}\|_{g}^{2} \ 
=\ \frac{b_{2}^{2}}{2p^{2}}\,\left(p^{2}\ -\ (a\varepsilon-1)^{2}\ +\ 
\left(p^{2}+ (a\varepsilon-1)^{2}\right)\,\cosh(2\,\omega_{2})\right) , $$
\\
we must require $b_{2}$ to be a non vanishing function. 
\\\\
\textbf{(\RomanNum{2}.2)}\label{Case 2-2 geral} If $b_{2}$ is a null function, then the surface will be 
$$F(t\,,\,s)\ =\ 
\left(
	-\,\frac{1}{p}\,
			\log\left(\cosh\left(\omega_{3}\right)\right) \ ,\ 
	\frac{\varepsilon}{p\alpha}\,\cos(\theta(t))\,
			\tanh\left(\omega_{2}\right) \ ,\ 
	\frac{\varepsilon}{p\alpha}\,\sin(\theta(t))\,
			\tanh\left(\omega_{2}\right) 
\right) . $$
\\[-10pt]

Also, since 
$$\|F_{t}\times F_{s}\|_{g}^{2} \ 
=\ \frac{\varepsilon^{2}}{p^{2}}\,(\theta')^{2}\,\sinh(\omega_{2}) , $$
\\
we have to require that $\theta' $ is a nowhere vanishing function. 
\\\\
\textbf{(Case \RomanNum{3})}\label{Case 3 geral} If $b_{2}$  vanishes for all $t\in (t_{1}\,,\,t_{2}) $ , then equation (1) becomes $c_{1}''\theta'\,-\,c_{1}'\theta''\,=\,0 , $ which is satisfied if $\theta' = l\,c_{1}'$ for some $l\in \mathbb{R} $ or if $c_{1}$ is a constant function. However if both $c_{1}'$ and $b_{2}$ are identically null, we fall back in the second sub-case of  \textbf{(Case \RomanNum{2})}, therefore, we only need to analyze the situation where $\theta'= l\,c_{1}' $. Under this assumption, the parametrization found in Theorem \ref{Ruled Surf Sol(3) geral} gives the surface

$$F(t\,,\,s)\ =\ 
\left(
	-\,\frac{1}{p}\,
	\log\left(\cosh\left(\omega\right)\right)  \ ,\ 
	\frac{\varepsilon}{p\alpha}\,\cos(l\,c_{1}(t))\,
		\left(\frac{1}{l}\ +\ \tanh\left(\omega\right)\right) \ ,\ 
	\frac{\varepsilon}{p\alpha}\,\sin(l\,c_{1}(t))\,
		\left(-\frac{1}{l}\ +\ \tanh\left(\omega\right)\right)
\right) . $$

Also, since $$\|F_{t}\times F_{s}\|_{g}^{2}\ 
=\ \frac{\varepsilon^{2}}{p^{2}}\,(1+l^{2})\,(c_{1}')^{2}\,\sinh^{2}\left(\omega\right) , $$
\\
we need to require $c_{1}'$ to be a nowhere vanishing function.
\\\\[-10pt]

We now go back to the situation where $b_{1}$ is not identically zero and apply the conditions found in cases (\RomanNum{1}), (\RomanNum{2}) and (\RomanNum{3}) in order to simplify the equation \eqref{Min Cond Sol3 ort gen}. Assuming the hypothesis of \textbf{(Case \RomanNum{1})}, in other words that $\theta'= b_{2}/\varepsilon  $, we will have
$$\begin{aligned}
    &&\|F_{t}\times F_{s}\|^{3}H \ = &\left(\frac{p}{2\varepsilon}\,b_{1}^{3}\ +\ 
	\frac{p}{2\varepsilon}b_{1}b_{2}^{2}\ +\ 
	\frac{1}{2}\left(b_{2}b_{1}'\,-\,b_{1}b_{2}'\right) \ +\ 
	\frac{\varepsilon}{2p}b_{1}(c_{1}')^{2}\right)\,\cos(2\omega) \ +\ \\\\
    &&&+\ \left(b_{1}b_{2}c_{1}'\ +\ \frac{\varepsilon}{2p}\,\left(b_{1}'c_{1}'\,-\,b_{1}c_{1}''\right)\,
	\right)\,\sin(2\omega) \ +\ \frac{p}{2\varepsilon}\,b_{1}^{3}\ +\ 
	\frac{p}{2\varepsilon}b_{1}b_{2}^{2}\ +\ 
	\frac{1}{2}\left(b_{2}b_{1}'\,-\,b_{1}b_{2}'\right) \ -\ 
	\frac{3\varepsilon}{2p}b_{1}(c_{1}')^{2} .
\end{aligned}  $$
\\[-10pt]

Subtracting the coefficient of $\cos(2\omega)$ from the independent term we obtain $b_{1}\,(c_{1}')^{2} = 0 $, which implies that $b_{1}$ or $c_{1}$ is identically null.
Since we are analyzing the cases where $b_{1}$ doesn't vanishes everywhere, then we take $c_{1}'$ identically null, meaning that $c_{1}=k $ for some $k\in \mathbb{R}$. So, we just have to solve the equation
$$\frac{p}{2\varepsilon}\,b_{1}^{3}\ +\ 
\frac{p}{2\varepsilon}b_{1}b_{2}^{2}\ +\ 
\frac{1}{2}\left(b_{2}b_{1}'\,-\,b_{1}b_{2}'\right) \ =\ 0  $$
\\[-10pt]

Seeing this as an autonomous equation for $b_{2}$, the general solution is
$$b_{2}(t)\ 
=\ b_{1}\,\tan\left(a\ +\ \frac{p}{\varepsilon}\,
B_{1}(t)\right) \ \text{ , where } a\in \mathbb{R} \text{ and } 
B_{1}(t)\ =\ \int_{t_{0}}^{t}b_{1}(u)\,du . $$
\\[-10pt]

 Substituting the restrictions above in the parametrization found in Theorem \ref{Ruled Surf Sol(3) geral}, we obtain the surface

$$F(t\,,\,s)\ =\ 
\left(-\frac{G(t)}{\varepsilon}\ -\ 
\frac{1}{p}\,\log\left(\cosh\left(\frac{p}{\varepsilon}\,s + k \right)\right) \ ,\ 
-\frac{1}{\alpha}\,\int_{t_{0}}^{t}b_{1}(t')\,
e^{p\,G(t')}\,dt' \ +\ 
\frac{\varepsilon}{p\alpha}\,e^{p\,G(t)}\,
\tanh\left(\frac{p}{\varepsilon}\,s + k  \right) \ ,\ 0
\right) , $$
where 
$$G(t)\ 
=\ -\frac{1}{\varepsilon}\int_{t_{0}}^{t}b_{1}(r)\,\tan\left(a\ +\ \frac{p}{\varepsilon}\,
B_{1}(t)\right)\,dr . $$
\\[-10pt]

Also, since $\|F_{t}\times F_{s}\|^{2}_{g} \ 
=\ b_{1}^{2}\,\cosh^{2}(\omega_{2})\,\sec^{2}\left(a\ +\ \frac{p}{\varepsilon}\,
B_{1}(t)\right) , $ we have to require $b_{1}$ to be nowhere vanishing.
\\

Assuming the conditions of the first sub-case of \textbf{(Case \RomanNum{2})}, that is $c_{1} = k\in \mathbb{R}$ and $ \theta' = a\,b_{2} $, then we need to solve the system
$$\left\{\begin{aligned}
	&\frac{p}{2\varepsilon}b_{1}^{3}\ +\ 
	\frac{1}{2}b_{2}b_{1}'\ +\ \frac{1}{2p\varepsilon}b_{1}\left(\,\left(p^{2}+(a\varepsilon-1)^{2}\right)\,b_{2}^{2}\,-\,p\varepsilon b_{2}'\right)\ =\ 0 \\\\
	&\left(a-\frac{1}{\varepsilon}\right)\,b_{1}^{2}\,b_{2}\ =\ 0 \\\\
	&\frac{p}{\varepsilon}b_{1}^{3}\,+\, \frac{p}{\varepsilon}b_{1}b_{2}^{2}\,+\,b_{2}b_{1}'\,-\,b_{1}b_{2}'\ =\ 0
\end{aligned}\right. $$
\\[-10pt]

In order to annihilate the second equation of the system we need that $a\,=\,1/\varepsilon$ or $b_{2}\,=\,0$. However, in order to not repeat \textbf{(Case \RomanNum{1})}, we must take $b_{2}$ identically zero. But, even by imposing that we end up with $b_{1}=0 $. Therefore, this case doesn't yield a minimal surface.
\\[-10pt]

Assuming the conditions of the second sub-case of \textbf{(Case \RomanNum{2})}, being $c_{1}\,=\,k\in \mathbb{R}$ and $b_{2}$ identically zero, then we need to solve
$$\frac{p}{2\varepsilon}b_{1}^{3}\,+\,\frac{p}{2\varepsilon}b_{1}(\theta')^{2}\ =\ 0 \ ,\ b_{1}^{2}\theta'\ =\ 0 \ ,\ 
\frac{p}{\varepsilon}b_{1}^{3}\ =\ 0 $$
\\
which is only satisfied when $b_{1}$ is identically null. Therefore, this case also doesn't yield a minimal surface.
\\

Assuming the conditions of \textbf{(Case \RomanNum{3})}, being $b_{2}$ identically null and $\theta'= l\,c_{1}' $ for some $l\in \mathbb{R} $, we must solve the system
\\
$$\left\{\begin{aligned}
    &\frac{p}{2\varepsilon}b_{1}^{3} \ +\ 
    \frac{\varepsilon}{2p}(1+l^{2})b_{1}(c_{1}')^{2}\ =\ 0 \\\\
    &-l\,b_{1}^{2}c_{1}'\ +\ \frac{\varepsilon}{2p}\,b_{1}'c_{1}' \ -\ 
    \frac{\varepsilon}{2p}\,b_{1}c_{1}'' \ =\ 0 \\\\
    &\frac{p}{2\varepsilon}b_{1}^{3} \ +\ 
    \frac{\varepsilon}{2p}(1+l^{2})b_{1}(c_{1}')^{2}\ =\ 0
\end{aligned}
\right. $$
\\[-10pt]

From the first equation above, 
$$b_{1}\,\left(\frac{p}{2\varepsilon}b_{1}^{2} \ +\ 
\frac{\varepsilon}{2p}(1+l^{2})(c_{1}')^{2}\right)\ =\ 0 \ \ 
\Rightarrow\ \ \frac{p}{2\varepsilon}b_{1}^{2} \ +\ 
\frac{\varepsilon}{2p}(1+l^{2})(c_{1}')^{2} \ =\ 0 \ \ 
\Leftrightarrow\ \ b_{1}\,=\,0 \ \text{ and }\ c_{1}'\,=\,0 .$$
\\[-10pt]

Therefore, this case does not yield a minimal surface, since, here, we are assuming that $b_{1}$ is not identically null. Finally, we summarized the results obtained in the following theorem:
\\[-10pt]

\begin{theorem}\label{The Min Surf Geral}
	The following are orthogonal ruled minimal surfaces in $\Sol(3)$:
	\\\\
	(1.1)
	$$F(t\,,\,s)\ =\ 
	\left(\frac{s}{\varepsilon}\ +\ R \ ,\ 
	\frac{1}{\alpha}e^{p\,R}\,
			\cos\left(\frac{2}{\varepsilon}\,s\,-\,R\right)\,f(t)\ ,\ 
	-\frac{1}{\alpha}e^{p\,R}\,
			\sin\left(\frac{2}{\varepsilon}\,s\,-\,R\right)\,f(t)
	\right) , $$
	where $ R\in \mathbb{R}\ \text{ and } f(t)\ \text{ is an arbitrary smooth function }$ whose derivative never vanishes.
	\\\\
	(1.2) $$F(t,s)\ 
	=\ \left(
	\frac{s}{\varepsilon} + R\ ,\ 
	\frac{e^{p\,R}}{\alpha}\,f(t)
		\left(k\,\cos\left(\frac{2s}{\varepsilon} - R\right)  + 
		\sin\left(\frac{2s}{\varepsilon} - R\right)\right) \ ,\ 
	\frac{e^{p\,R}}{\alpha}\,f(t) 
		\left(\cos\left(\frac{2s}{\varepsilon} - R\right)  -  k\,\sin\left(\frac{2s}{\varepsilon} - R\right)\right)
	\right),  $$
	\\
	$\text{ where } R\in \mathbb{R}\ \text{ and } f(t)\ \text{ is an arbitrary } $ smooth function whose derivative never vanishes.
	\\\\
	(2.1) $$F(t\,,\,s)\ =\ 
	\left(
		A_{1}(t) \ -\ \frac{1}{p}\,\log\left(\cosh\left(\frac{p}{\varepsilon}s\,+\,c_{1}(t)\right)\right)  \ ,\ 
		\frac{\varepsilon}{p\alpha}\,e^{p\,A_{1}(t)}\,
		\tanh\left(\frac{p}{\varepsilon}s\,+\,c_{1}(t)\right) \ ,\ 
		-\,\frac{\varepsilon}{p\alpha}\int_{t_{0}}^{t}e^{p\,A_{1}(t)}\,c_{1}'(t)\,dt\right),  $$
	\\
	$\text{ where }c_{1} \text{ is an arbitrary smooth function and} $ $A_{1}(t)$ is a smooth function whose derivative never vanishes.
	\\\\
	(2.2) $$F(t\,,\,s)\ =\ 
	\left(
		A_{1} \ -\  \frac{1}{p}\,\log\left(\cosh\left(\omega_{2}\right)\right) \ ,\ 
		\frac{\varepsilon}{p\alpha}\,e^{p\,A_{1}}\,
		\cos\left((a\,\varepsilon-1)\,A_{1}\right)\,
		\tanh\left(\omega_{2}\right)\ ,\ 
		-\frac{\varepsilon}{p\alpha}\,e^{p\,A_{1}}\,
		\sin\left((a\,\varepsilon-1)\,A_{1}\right)\,
		\tanh\left(\omega_{2}\right)
		\right), $$
	\\
	$\text{ where } \omega_{2}=(p/\varepsilon)s+k\ ,\ a\,,\,k\in \mathbb{R} \text{ and } A_{1}(t)  $ is an arbitrary smooth function whose derivative never vanishes.
	\\\\
	(2.3) $$F(t\,,\,s)\ =\ 
	\left(
		-\,\frac{1}{p}\,
				\log\left(\cosh\left(\omega_{2}\right)\right) \ ,\ 
		\frac{\varepsilon}{p\alpha}\,\cos(\theta(t))\,
				\tanh\left(\omega_{2}\right) \ ,\ 
		\frac{\varepsilon}{p\alpha}\,\sin(\theta(t))\,
				\tanh\left(\omega_{2}\right)
	\right),  $$
	\\
	$\text{ where } \omega_{2}=(p/\varepsilon)s+k\ ,\  k\in \mathbb{R} \text{ and } \theta(t) \text{ is an arbitrary smooth function } $whose derivative never vanishes.
	\\\\
	(2.4) $$F(t\,,\,s)\ =\ 
	\left(
		-\,\frac{1}{p}\,
		\log\left(\cosh\left(\omega\right)\right)  \ ,\ 
		\frac{\varepsilon}{p\alpha}\,\cos(l\,c_{1}(t))\,
			\left(\frac{1}{l}\ +\ \tanh\left(\omega\right)\right) \ ,\ 
		\frac{\varepsilon}{p\alpha}\,\sin(l\,c_{1}(t))\,
			\left(-\frac{1}{l}\ +\ \tanh\left(\omega\right)\right)
	\right) , $$
	\\
	$\text{ where } \omega = (p/\varepsilon)s+c_{1}(t)\ ,\  l\in \mathbb{R}\setminus\{0\} \text{ and } c_{1}(t) \text{ is an arbitrary smooth function } $ whose derivative never vanishes.
	\\\\
	(2.5)$$F(t\,,\,s)\ =\ 
	\left(-\frac{G(t)}{\varepsilon}\ -\ 
	\frac{1}{p}\,\log\left(\cosh\left(\frac{p}{\varepsilon}\,s + k \right)\right) \ ,\ 
	-\frac{1}{\alpha}\,\int_{t_{0}}^{t}b_{1}(t')\,
	e^{p\,G(t')}\,dt' \ +\ 
	\frac{\varepsilon}{p\alpha}\,e^{p\,G(t)}\,
	\tanh\left(\frac{p}{\varepsilon}\,s + k  \right) \ ,\ 0
	\right) , $$
	where 
	$$G(t)\ 
	=\ -\frac{1}{\varepsilon}\int_{t_{0}}^{t}b_{1}(r)\,\tan\left(a\ +\ \frac{p}{\varepsilon}\,
	\int_{t_{0}}^{r}b_{1}(u)\,du\right)\,dr $$
	\\
	where $a\,,\,k\,\in \mathbb{R} $, $b_{1}(t) $ is a non vanishing smooth function.
	\\\\
	Here, labels (1.1) and (1.2) correspond to the first solution for $F_{s}$ in Proposition~\ref{Prop1 Sol3 gen}, while (2.1) to (2.5) refer to the second solution found in the same proposition
\end{theorem}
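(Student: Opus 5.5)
Each surface in the list is produced by the case analysis above, so the plan is to verify, for each family, that (i) it is one of the orthogonal ruled surfaces of Theorem \ref{Ruled Surf Sol(3) geral} with the stated choice of parameters; (ii) with those parameters the mean curvature $H$ vanishes identically; and (iii) the immersion is regular, i.e.\ $\|F_{t}\times F_{s}\|_{g}\neq 0$. Point (i) reduces to substituting the restrictions on $a_{i}$, $b_{i}$, $c_{1}$, $\theta$ into Theorem \ref{Ruled Surf Sol(3) geral} together with the relations between the $A_{i}$'s and the $b_{i}$'s (or $a_{i}$'s) obtained before that theorem. For instance, $A_{1}'=-b_{2}/\varepsilon$ turns the hypothesis $\theta'=b_{2}/\varepsilon$ into $\theta=-A_{1}+\text{const}$, so $A_{1}+\theta$ is constant and the integrals simplify to the displayed forms. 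For (ii) I use the formula $\|F_{t}\times F_{s}\|^{3}_{g}H=g(\nabla_{F_{t}}F_{t},F_{t}\times F_{s})$ with the frames of Theorem \ref{Th1 Sol3 gen}. This needs only the Lie algebra data: $\nabla_{F_{t}}F_{t}=\partial_{t}F_{t}^{j}u_{j}+\sum F_{t}^{i}F_{t}^{k}\nabla_{u_{k}}u_{i}$, computed with the connection table of Section \ref{Covariant der}.

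For (1.1) and (1.2), $F_{s}=u_{1}$ and the expression for $H$ reduces to the Wronskian-type determinant $a_{1}a_{2}'-a_{1}'a_{2}$ up to the factor $-e^{-2ps/\varepsilon}$. That determinant vanishes when $a_{2}\equiv0$, or when $a_{1}=ka_{2}$ with $k$ constant. Regularity is $a_{1}^{2}+a_{2}^{2}\neq0$, which becomes $f'\neq0$ with $f=\int a_{2}$ (respectively $f=\int a_{1}$ in the first subcase).

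For (2.1)--(2.5) I expand \eqref{Min Cond Sol3 ort gen} in the basis $\{1,\cos 2\omega,\sin 2\omega\}$, using the linear independence of these functions in $s$. I then check that all three coefficients vanish under each set of hypotheses:
\begin{itemize}
\item[(2.1)] $b_{1}=0$, $\theta'=b_{2}/\varepsilon$;
\item[(2.2)] $b_{1}=0$, $c_{1}$ constant, $\theta'=ab_{2}$;
\item[(2.3)] $b_{1}=b_{2}=0$, $c_{1}$ constant;
\item[(2.4)] $b_{1}=b_{2}=0$, $\theta'=lc_{1}'$;
\item[(2.5)] $c_{1}$ constant, $\theta'=b_{2}/\varepsilon$, $b_{2}=b_{1}\tan(a+\tfrac{p}{\varepsilon}B_{1})$.
\end{itemize}
In (2.5) the remaining constraint is the Riccati-type equation $\tfrac{p}{\varepsilon}b_{1}(b_{1}^{2}+b_{2}^{2})+b_{2}b_{1}'-b_{1}b_{2}'=0$. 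Writing $b_{2}=b_{1}u$ turns it into $u'=\tfrac{p}{\varepsilon}b_{1}(1+u^{2})$, whose solution is the stated tangent. Finally, I translate the regularity conditions (nonvanishing of $b_{2}$, $\theta'$, $c_{1}'$, $b_{1}$ respectively) into the derivative conditions in the statement, via $A_{1}'=-b_{2}/\varepsilon$ and its analogues.

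The main obstacle is the heavy symbolic verification of \eqref{Min Cond Sol3 ort gen} itself, i.e.\ computing $g(\nabla_{F_{t}}F_{t},F_{t}\times F_{s})$ for the second frame of Theorem \ref{Th1 Sol3 gen}. To keep it manageable, I would work in the rotated frame $\{F_{s},Y_{2},W_{2}\}$, where $F_{t}=f_{2}Y_{2}+h_{2}W_{2}$, rather than in $\{u_{i}\}$. Then $F_{t}\times F_{s}$ is a combination of $Y_{2}$ and $W_{2}$ with coefficients $\pm f_{2},\pm h_{2}$, and only the $Y_{2}$ and $W_{2}$ components of $\nabla_{F_{t}}F_{t}$ are needed. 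After that, the specializations are routine, though tedious; if any coefficient resists, I would check it directly in each specialized case instead of via the general formula.
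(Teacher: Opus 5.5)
\paragraph{Where the proposal breaks.} Your frame-level reasoning is sound, and it is the paper's derivation run in the verification direction. For $F_s=u_1$ one does get $\|F_t\times F_s\|_g^3H=-e^{-2ps/\varepsilon}(a_1a_2'-a_1'a_2)$. The five sets of hypotheses you list are exactly those the paper's case analysis produces, and your Riccati reduction for (2.5) is correct. The gap is step (i). You assert it instead of carrying it out, and it inherits the paper's slips in converting frame data into coordinates. The substitution does \emph{not} produce the displayed surfaces, and several displayed surfaces are not orthogonal ruled surfaces at all.

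The clearest case is (1.1)/(1.2). The relation you intend to use, $A_2'=\frac{1}{\alpha}e^{pA_1}\bigl(a_1\cos(\frac{2}{\varepsilon}s-A_1)+a_2\sin(\frac{2}{\varepsilon}s-A_1)\bigr)$, is impossible, since $A_2,A_3$ depend on $t$ only. Take $x=R+s/\varepsilon$, $\partial_y=\alpha e^{-px}(\cos x\,u_2-\sin x\,u_3)$ and $\partial_z=\alpha e^{-px}(\sin x\,u_2+\cos x\,u_3)$. Matching $\partial_tF$ with $F_{t,1}$ of Theorem \ref{Th1 Sol3 gen} then gives $A_2'=\frac{e^{pR}}{\alpha}(a_1\cos R-a_2\sin R)$ and $A_3'=\frac{e^{pR}}{\alpha}(a_1\sin R+a_2\cos R)$. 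Hence $a_1a_2'-a_1'a_2=0$ yields $F=(R+s/\varepsilon,\ y_0+\lambda(t)\cos\beta_0,\ z_0+\lambda(t)\sin\beta_0)$, which is a vertical plane. The displays coincide with this along $s=0$ but carry a spurious phase $\frac{2}{\varepsilon}s$. For the displayed (1.1), $\|F_s\|_g^2=1+\frac{4f(t)^2}{\varepsilon^2}e^{-2ps/\varepsilon}$, which is not constant in $s$. So its $s$-curves are not geodesics and $F_s\neq u_1$; the same holds for (1.2). Your Wronskian computation is about a different surface.

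\paragraph{The second family.} It has the same defect.
\begin{itemize}
\item (2.1): the $\tanh$ terms force $A_1+\theta\equiv 0$. With $b_1=0$, the relations preceding Theorem \ref{Ruled Surf Sol(3) geral} then give $A_2'=-\frac{\varepsilon}{p\alpha}e^{pA_1}c_1'$ and $A_3'=0$. The integral belongs in the second slot, and the surface lies in a plane $z=\mathrm{const}$. For the display as written, $g(F_t,F_s)=\frac{\varepsilon}{p}c_1'\neq 0$.
\item (2.4): with $\theta=lc_1$, $A_1=0$ one gets $y=\frac{\varepsilon}{p\alpha}(\cos(lc_1)\tanh\omega-\frac{1}{l}\sin(lc_1))$ and $z=\frac{\varepsilon}{p\alpha}(\sin(lc_1)\tanh\omega+\frac{1}{l}\cos(lc_1))$. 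This is not the display.
\item (2.5): $A_1'=-b_2/\varepsilon=G'$, so the first slot should be $G$, not $-G/\varepsilon$, and the integral belongs in the third slot.
\end{itemize}
So the statement as written cannot be verified; carrying out step (i) honestly proves a corrected list.

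\paragraph{An independent check.} With $w=\frac{\varepsilon}{|p|\alpha}e^{px}$,
\[ g_{\varepsilon,\alpha}=\varepsilon^2dx^2+\alpha^2e^{-2px}(dy^2+dz^2)=\frac{\varepsilon^2}{p^2w^2}(dw^2+dy^2+dz^2), \]
i.e.\ hyperbolic space. In this model:
\begin{itemize}
\item the type-(2) rulings are Euclidean semicircles orthogonal to $\{w=0\}$;
\item (2.2) is a hyperbolic helicoid;
\item (2.3) and the corrected (2.4), (2.5) are pieces of totally geodesic hemispheres;
\item the displayed (1.1)/(1.2) are unions of horizontal segments (horocycles) through the axis $y=z=0$, with angle $\psi=k\log w$, $k\neq 0$.
\end{itemize}
By the conformal-change formula, minimality of such a surface at Euclidean distance $C$ from the axis would require $w\psi''=2\psi'(1+C^2\psi'^2)$. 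This fails for every $C\neq 0$, so (1.1)/(1.2) are not minimal.
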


\hfill
\\
\section{Minimal Ruled Surfaces in the Group $\Sol(3)$ with $p=0$}
In our discussion of ruled surfaces in $\Sol(3)$ the parameter $p$ appears in the denominator of certain terms in both $F_{t}$ and $F_{s}$, so it cannot be taken to be zero. Therefore, in what follows, we examine the ruled surfaces corresponding to the case where $p=0 $.
\\
\subsection{Covariant derivatives for $p=0$}
Using the formulas from Section \ref{Covariant der} for $p=0$ the commutation relations and connection coefficients become
$$[u_{1}\,,\,u_{2}]\ 
=\  \,\frac{1}{\varepsilon}u_{3} 
\ \ ,\ \ \ 
[u_{1}\,,\,u_{3}]\ =\ 
-\frac{1}{\varepsilon}u_{2}
\ \ ,\ \ \ 	
[u_{2}\,,\,u_{3}] \ =\ 0 \ ,\, and\ \ $$

$$\begin{aligned}
	&\nabla_{u_{1}}u_{1} = 0, & 
	&\nabla_{u_{1}}u_{2} = \frac{1}{\varepsilon}u_{3}, & 
	&\nabla_{u_{1}}u_{3} = -\frac{1}{\varepsilon}\,u_{2}, & 
	&\nabla_{u_{2}}u_{1} = 0, & 
	&\nabla_{u_{2}}u_{2} = 0, &
	&\nabla_{u_{2}}u_{3} = 0, &
	&\nabla_{u_{3}}u_{1} = 0, \\\\
	&\nabla_{u_{3}}u_{2} = 0, & 
	&\nabla_{u_{3}}u_{3} = 0.
\end{aligned}  $$
\\

\subsection{The Ruled Surfaces for $p=0$}
Using the procedure detailed in Section \ref{Sec 3}, the following theorem characterizes the possible tangent frames, $F_s$ and $F_{t}$, of the ruled surfaces in $\Sol(3)$.
\\

\begin{theorem}\label{Th1 Sol3 espec} Let  $F:(t_{1}\,,\,t_{2})\times (s_{1}\,,\,s_{2})\to Sol(3) $ be a ruled surface with ruling geodesic curve $s\mapsto F(t\,,\,s) $, then 
\\
$$F_{s}\ 
=\ c_{1}(t)\,u_{1} \ +\ 
\left(c_{2}(t) \cos \left(\phi\right) + 
c_{3}(t) \sin \left(\phi\right)\right)\,u_{2} \ +\ 
\left(-c_{2}(t) \sin \left(\phi\right) + 
c_{3}(t) \cos \left(\phi\right)\right)\,u_{3}
  \text{ and } $$
\\
$$F_{t}\ 
=\ \left(
\begin{gathered}
	b_{1}(t)+s c_{1}'(t) 
	\\\\
	\cos\left(\phi\right)
	\left(\left( c_{2}'(t)-\frac{b_{1}(t) c_{3}(t)}{\varepsilon } \right) s + b_{2}(t)\right)
	 +
	\sin\left(\phi\right)
	\left(\left(c_{3}'(t)+\frac{b_{1}(t) c_{2}(t)}{\varepsilon} \right) s  + b_{3}(t)\right)
	\\\\
	-\sin \left(\phi\right)
	\left(\left( c_{2}'(t)-\frac{b_{1}(t) c_{3}(t)}{\varepsilon } \right)s\, +\, b_{2}(t)\right) + 
	\cos \left(\phi\right)
	\left(\left(c_{3}'(t) + 
	\frac{b_{1}(t) c_{2}(t)}{\varepsilon} \right)s + b_{3}(t)\right)
\end{gathered}
\right) , $$
\\\\
where  $c_{1}(t)\,,\,c_{2}(t)\,,\,c_{3}(t)\,,\,b_{1}(t)\,,\,b_{2}(t)\,,\,b_{3}(t) $ are arbitrary smooth functions, $
\phi\, =\, \frac{1}{\varepsilon }\,s\,  c_{1}(t)  $ and the column matrices are relative to the basis $\{u_{1}\,,\,u_{2}\,,\,u_{3}\} $.
\end{theorem}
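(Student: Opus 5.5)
Following the procedure of Section \ref{Sec 3}, I will first solve the geodesic equation \eqref{general V2 eq} for $F_s$ and then the integrability equation $\nabla_{F_t}F_s-\nabla_{F_s}F_t=0$ for $F_t$. Here no orthogonality is imposed, so $F_s$ has three free components.

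\textbf{Step 1: the ruling field.} Write $F_s=F_s^1u_1+F_s^2u_2+F_s^3u_3$ and use the $p=0$ connection table. The only nonzero terms are $\nabla_{u_1}u_2=\frac1\varepsilon u_3$ and $\nabla_{u_1}u_3=-\frac1\varepsilon u_2$. Hence
\[
\nabla_{F_s}F_s=\partial_sF_s^1\,u_1+\Big(\partial_sF_s^2-\tfrac1\varepsilon F_s^1F_s^3\Big)u_2+\Big(\partial_sF_s^3+\tfrac1\varepsilon F_s^1F_s^2\Big)u_3 .
\]
Setting this to zero gives $F_s^1=c_1(t)$. The remaining pair is then a linear constant-coefficient rotation system with angular speed $c_1/\varepsilon$. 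Its solution is $(F_s^2,F_s^3)=R(-\phi)(c_2,c_3)$ with $\phi=sc_1/\varepsilon$, which is exactly the stated form of $F_s$.

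\textbf{Step 2: the transversal field.} Write $F_t=\sum F_t^ju_j$ and expand
\[
\nabla_{F_t}F_s-\nabla_{F_s}F_t=0 .
\]
Both covariant derivatives only involve $\nabla_{u_1}$. Since $\nabla_{u_1}$ acts on the $(u_2,u_3)$-plane as rotation by $1/\varepsilon$, the system becomes:
\begin{itemize}
\item[(a)] $\partial_sF_t^1=\partial_tF_s^1=c_1'$, so $F_t^1=b_1(t)+sc_1'(t)$.
\item[(b)] For the complex combination $Z_t=F_t^2+iF_t^3$, with $Z_s$ defined analogously: $\partial_sZ_t-\partial_tZ_s+\frac{i}{\varepsilon}\big(F_t^1Z_s-F_s^1Z_t\big)=0$, the sign fixed by the commutators. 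Note that $Z_s=e^{-i\phi}(c_2+ic_3)$.
\end{itemize}
For (b) I substitute $Z_t=e^{-i\phi}W$. The homogeneous rotation term $-\frac{i}{\varepsilon}c_1Z_t$ then cancels with the derivative of $e^{-i\phi}$, leaving $\partial_sW$ equal to
\[
e^{i\phi}\Big(\partial_tZ_s-\tfrac{i}{\varepsilon}F_t^1Z_s\Big).
\]
Now $\partial_tZ_s=e^{-i\phi}\big((c_2'+ic_3')-\tfrac{i}{\varepsilon}sc_1'(c_2+ic_3)\big)$. The $sc_1'$ part cancels against the $sc_1'$ part of $F_t^1$, so $\partial_sW=(c_2'+ic_3')-\frac{i}{\varepsilon}b_1(c_2+ic_3)$. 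This is independent of $s$, so $W$ is linear in $s$ with integration constant $b_2+ib_3$. That reproduces the coefficients $c_2'-b_1c_3/\varepsilon$ and $c_3'+b_1c_2/\varepsilon$ in the statement.

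\textbf{Main obstacle.} The delicate point is Step 2's bookkeeping. The orientation of the rotation (signs of $\sin\phi$), the convention for which index of $L^j_{ik}$ is differentiated, and the exact cancellation of the $s\,c_1'$ terms all have to match the statement. I would verify the result by substituting it back into both components of the integrability equation. Everything else is routine linear ODE solving.
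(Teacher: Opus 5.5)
Your overall route is the same as the paper's: solve $\nabla_{F_s}F_s=0$ in the frame, then solve the integrability system. Steps 1 and 2(a) are correct.

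The problem is Step 2(b). It is the only place where the proof has real content, and there the signs do not work out as you claim. Take the $u_2$-component plus $i$ times the $u_3$-component of $\nabla_{F_t}F_s-\nabla_{F_s}F_t=0$, and use $-F_s^3+iF_s^2=iZ_s$ and $F_t^3-iF_t^2=-iZ_t$. This gives
\[
\partial_s Z_t+\frac{i}{\varepsilon}\,c_1 Z_t \;=\;\partial_t Z_s+\frac{i}{\varepsilon}\,F_t^1 Z_s ,
\]
that is, $\partial_sZ_t-\partial_tZ_s-\frac{i}{\varepsilon}\big(F_t^1Z_s-F_s^1Z_t\big)=0$. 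The sign of the $\frac{i}{\varepsilon}$ term is opposite to the one you wrote. Your version has several consequences:
\begin{itemize}
\item Its homogeneous solution is $e^{+i\phi}$, so the substitution $Z_t=e^{-i\phi}W$ doubles the rotation term instead of cancelling it.
\item Your formula $\partial_sW=e^{i\phi}\big(\partial_tZ_s-\frac{i}{\varepsilon}F_t^1Z_s\big)$ makes the $s\,c_1'$ contributions add up to $-\frac{2i}{\varepsilon}s\,c_1'(c_2+ic_3)$ rather than cancel. Then $W$ would be quadratic in $s$.
\item Even granting the cancellation, your final expression $(c_2'+ic_3')-\frac{i}{\varepsilon}b_1(c_2+ic_3)$ has real part $c_2'+b_1c_3/\varepsilon$ and imaginary part $c_3'-b_1c_2/\varepsilon$. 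These are the opposite of the statement, so your claim that it reproduces the stated coefficients is false as written.
\end{itemize}

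With the correct equation everything goes through. The operator acting on $Z_t$ is the same as the geodesic operator acting on $Z_s$, which is why $e^{-i\phi}$ is the right integrating factor. Then
\[
\partial_sW=e^{i\phi}\Big(\partial_tZ_s+\frac{i}{\varepsilon}F_t^1Z_s\Big)=(c_2'+ic_3')+\frac{i}{\varepsilon}\,b_1(c_2+ic_3),
\]
and the $s\,c_1'$ terms genuinely cancel. Integrating in $s$ gives
\[
W=\Big(\big(c_2'-\tfrac{b_1c_3}{\varepsilon}\big)s+b_2\Big)+i\Big(\big(c_3'+\tfrac{b_1c_2}{\varepsilon}\big)s+b_3\Big).
\]
Setting $Z_t=e^{-i\phi}W$ then yields exactly the stated $F_t^2$ and $F_t^3$.

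So the fix is local, and the complex-variable reduction is a clean way to carry out the paper's ``solving the resulting system''. However, you need to derive the complex equation from the two real components rather than fixing its sign ``by the commutators'' and asserting agreement at the end. The back-substitution check you proposed would have caught this.
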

\hfil

\begin{proof}
	In order to obtain $F_{s}$ we just have to solve the geodesic equation $\nabla_{F_{s}}F_{s}\,=\,0 $ . Then, by expanding it in the frame $\{u_{1}\,,\,u_{2}\,,\,u_{3}\} $ , we obtain the system of equations:
	\\
	$$	\left\{
	\begin{aligned}
	    &\pd{F_{s}^{1}}{s}  \ =\ 0
	    \\\\
	    &\pd{F_{s}^{2}}{s}\ -\ 
	    \frac{1}{\varepsilon }F_{s}^{1} F_{s}^{3} \ =\ 0
	    \\\\
		&\pd{F_{s}^{3}}{s}\ +\ 
		\frac{1}{\varepsilon } F_{s}^{1} F_{s}^{2} \ =\ 0
	\end{aligned}\right. $$
	\\[-5pt]
		
	By integration the first equation, we obtain $F_{s}^{1}\,=\,c_{1}(t) $, then, substituting it in the last two equations, we get a linear system of differential equations whose general solution is
	$$F_{s}^{2} = c_{2}(t) \cos \left(\frac{s}{\varepsilon }  c_{1}(t)\right)\ +\ 
			c_{3}(t) \sin \left(\frac{s}{\varepsilon }  c_{1}(t)\right) \ \text{ and }\ 
	F_{s}^{3} = -c_{2}(t) \sin \left(\frac{s}{\varepsilon }  c_{1}(t)\right)\ +\ 
			c_{3}(t) \cos \left(\frac{s}{\varepsilon }  c_{1}(t)\right) . $$
	\\[-10pt]
	
	As explained in Section \ref{Sec 3}, in order to obtain $F_{t}$ we have to solve the equation given by the integrability condition $\nabla_{F_{t}}F_{s}\,-\,\nabla_{F_{s}}F_{t}\ =\ 0$. Then, expanding it in the frame $\{u_{1}\,,\,u_{2}\,,\,u_{3}\} $ , we obtain the system:
	
	$$\left\{\begin{aligned}
		&\pd{F_{s}^{1}}{t}\ -\ \pd{F_{t}^{1}}{s} \ =\ 0
		\\\\
		&\pd{F_{s}^{2}}{t}\ -\ \pd{F_{t}^{2}}{s} \ -\ 
		\frac{1}{\varepsilon } F_{t}^{1} F_{s}^{3}\ +\ 
		\frac{1}{\varepsilon } F_{t}^{3} F_{s}^{1} \ =\ 0
		\\\\
		&\pd{F_{s}^{3}}{t}\ -\ \pd{F_{t}^{3}}{s} \ +\ 
		\frac{1}{\varepsilon } F_{t}^{1} F_{s}^{2} \ -\ 
		\frac{1}{\varepsilon } F_{t}^{2} F_{s}^{1} \ =\ 0
	\end{aligned}\right. $$
	\\
	
	Substituting the values of $F_{s}^{1}\,,\,F_{s}^{2}\,,\,F_{s}^{3}$ and solving the resulting system we obtain
	$$\left\{\begin{aligned}
		&F_{t}^{1} = b_{1}(t)+s c_{1}'(t) 
		\\\\
		&F_{t}^{2} = \cos\left(\frac{c_{1}(t)}{\varepsilon } s \right) 
		\left(\left(c_{2}'(t)-\frac{1}{\varepsilon } b_{1}(t) c_{3}(t)\right) s + b_{2}(t)\right)
		  +  
		\sin\left(\frac{c_{1}(t)}{\varepsilon} s \right) 
		\left(\left(c_{3}'(t)+\frac{1}{\varepsilon} b_{1}(t) c_{2}(t)\right) s  + b_{3}(t)\right)
		\\\\
		&F_{t}^{3} =  - sin \left(\frac{c_{1}(t)}{\varepsilon } s \right) 
		\left(\left(c_{2}'(t) - \frac{1}{\varepsilon } b_{1}(t) c_{3}(t)\right)s  +\, b_{2}(t)\right) + 
		\cos \left(\frac{c_{1}(t)}{\varepsilon} s \right) 
		\left(\left(c_{3}'(t) +
		\frac{1}{\varepsilon} b_{1}(t) c_{2}(t)\right)s + b_{3}(t)\right)
	\end{aligned}\right.  $$
\end{proof}
\hfill
\\[-10pt]

\subsection{The parametrized ruled surfaces for $p=0$}
As described in Section \ref{Sec 3}, we obtained the tangent frames $\{F_t, F_s\}$ using only the Lie algebra of $\Sol(3)$, without referencing its product structure. And, if one desires, this is enough to also characterize the minimal ruled surfaces, as computing the mean curvature requires only the tangent vectors. However, our goal here is to also obtain the explicit parametrization of these surfaces in the canonical global chart. For that we  first expand our orthonormal left invariant frame in the canonical global chart of $\Sol(3) $, obtaining
$$u_{1} \ 
=\ \frac{1}{\varepsilon}\pdo{x}\,,\  u_{2}\ 
=\ \frac{1}{\alpha}\,\cos(x)\,\pdo{y}\ +\ \frac{1}{\alpha}\,\sin(x)\,\pdo{z} 
\, , \ 
u_{3}\ 
=\ -\frac{1}{\alpha}\,\sin(x)\,\pdo{y}\ +\ \frac{1}{\alpha}\,\cos(x)\,\pdo{z} $$
\\[-10pt]

Then, expanding the formula for $F_{s}$ from Theorem \ref{Th1 Sol3 espec} in terms of this canonical basis yields
$$F_{s}\ 
=\ -\frac{c_{1}}{\varepsilon}\,\pdo{x}
\ +\ 
\left( 
\frac{c_{2}}{\alpha}\,
\cos\left(\phi - x(t ,s)\right)  + 
\frac{c_{3}}{\alpha}\,
\sin\left(\phi - x(t , s)\right)
\right)\,\pdo{y}\ -\ \left( 
\frac{c_{3}}{\alpha}\,
\cos\left(\phi - x(t , s)\right)  - 
\frac{c_{2}}{\alpha}\,
\sin\left(\phi - x(t\,,\,s)\right)
\right)\,\pdo{z} $$
and, since 
$$F_{s}\ 
=\ \pd{x}{s}\,\pdo{x}\ +\ \pd{y}{s}\,\pdo{y} \ +\ \pd{z}{s}\,\pdo{z} , $$
\\
then, by solving the system of equations, one obtains the general solution:

$$F(t\,,\,s)\ 
=\ \left(A_{1} \ +\ \frac{c_{1}}{\varepsilon}\,s \ ,\ 
	A_{2} + 
	\frac{1}{\alpha}\left(c_{2}\,
	\cos(A_{1})  - c_{3}\,\sin(A_{1})\right)\,s \ ,\ 
	A_{3} + 
	\frac{1}{\alpha}\left(c_{3}\,
	\cos(A_{1}) \ +\ c_{2}\,\sin(A_{1})\right)\,s
\right) $$
\\[-10pt]

Where $A_{1}(t)\,,\,A_{2}(t)\,,\,A_{3}(t) $ are arbitrary smooth functions that come from the integration of the equations. In order to relate the $A_{i}$'s and $b_{i}$'s we just have to expand $F_{t}$ obtained above in the basis $\{u_{1}\,,\,u_{2}\,,\,u_{3}\}$ and subtract from the expression for $F_{t}$ found in Theorem(\ref{Th1 Sol3 espec}) and identify when this difference is null. Doing that we get

$$A_{1}' \ =\ -\frac{1}{\varepsilon}\,b_{1} \ ,\ 
A_{2}' \ 
	=\ \frac{1}{\alpha}\,\left(
	b_{2}\,\cos(A_{1}) \ -\ b_{3}\,\sin(A_{1})\right) \ ,\ 
	A_{3}'\ 
		=\ \frac{1}{\alpha}\,\left(
		b_{3}\,\cos(A_{1}) \ +\ b_{2}\,\,\sin(A_{1})\right) . $$
\\[-10pt]

We summarize the results obtained in the following theorem:
\begin{theorem}\label{Ruled Surf Param Espec}
    Let $F:(t_{1}\,,\,t_{2})\times (s_{1}\,,\,s_{2})\to \Sol(3)$ be a ruled surface, then it must have the following form:
    \\\\
    $$F(t\,,\,s)\ 
    =\ \left(A_{1} \ +\ \frac{c_{1}}{\varepsilon}\,s \ ,\ 
    	A_{2} + 
    	\frac{1}{\alpha}\left(c_{2}\,
    	\cos(A_{1})  - c_{3}\,\sin(A_{1})\right)\,s \ ,\ 
    	A_{3} + 
    	\frac{1}{\alpha}\left(c_{3}\,
    	\cos(A_{1}) \ +\ c_{2}\,\sin(A_{1})\right)\,s
    \right)   , $$
\hfil
\\
\vspace{3pt}
where $c_{1}(t)\,,\,c_{2}(t)\,,\,c_{3}(t)\,,\,A_{1}(t)\,,\,A_{2}(t)\,,\,A_{3}(t) $ are arbitrary smooth functions.

\end{theorem}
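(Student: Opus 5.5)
Since $F$ is ruled, each ruling $s\mapsto F(t,s)$ is a geodesic of $g_{\varepsilon,\alpha}$ with $p=0$. So I would integrate the geodesic equation directly in the canonical chart, and use Theorem \ref{Th1 Sol3 espec} only to identify the initial velocity. The decisive observation is that for $p=0$ the connection computed above has $\nabla_{u_2}=\nabla_{u_3}=0$ on the frame, and the only nonzero terms are $\nabla_{u_1}u_2=\frac1\varepsilon u_3$ and $\nabla_{u_1}u_3=-\frac1\varepsilon u_2$. In the chart, $u_2,u_3$ are the coordinate fields $\frac1\alpha\partial_y,\frac1\alpha\partial_z$ rotated by the angle $x$. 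Since the rotation of $(u_2,u_3)$ relative to $(\partial_y,\partial_z)$ is driven by $x$ exactly as the left-invariant components rotate along geodesics, the coordinate vector fields $\partial_x,\partial_y,\partial_z$ should be parallel. Hence $\Gamma^j_{ik}\equiv 0$ in these coordinates: the metric is $\varepsilon^2dx^2+\alpha^2(dy^2+dz^2)$ and the space is flat.

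The first step is to verify this flatness claim. Either compute the metric in the chart by writing $\lambda^1=\varepsilon\,dx$ and checking that $(\lambda^2,\lambda^3)$ is a rotation by $\pm x$ of $(dy,dz)$, or check directly that $\nabla_{u_1}$ of the rotated combinations vanishes. Either way the geodesic equation \eqref{Ruled Cond} reduces to $\partial_s^2F^j=0$. Therefore $F(t,s)=A(t)+s\,V(t)$ with $A=(A_1,A_2,A_3)$ and $V$ arbitrary smooth in $t$.

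The second step is to express $V(t)=\partial_sF(t,0)$ through the frame at the point $A(t)$ using the chart expression of $u_1,u_2,u_3$. By Theorem \ref{Th1 Sol3 espec}, $F_s(t,0)=c_1u_1+c_2u_2+c_3u_3$, since $\phi=0$ at $s=0$. Evaluating the frame at $x=A_1$ gives
$$V=\Big(\tfrac{c_1}{\varepsilon},\ \tfrac1\alpha(c_2\cos A_1-c_3\sin A_1),\ \tfrac1\alpha(c_2\sin A_1+c_3\cos A_1)\Big),$$
which is exactly the stated form. Because $c_1,c_2,c_3$ are arbitrary, this is no restriction: any $V$ can be written this way. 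As a consistency check, one differentiates in $s$ and confirms that the $u$-components of $\partial_sF$ rotate by the angle $\phi=\frac{s}{\varepsilon}c_1$ as $x=A_1+\frac{c_1}{\varepsilon}s$ grows, matching Theorem \ref{Th1 Sol3 espec}.

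The third step is to relate the $A_i$ to the $b_i$ via the $\partial_t$ components at $s=0$, giving the relations displayed just before the statement. These are optional, since the statement leaves the $A_i$ arbitrary. The main obstacle is the first step: I must get the sign conventions of the rotation right, since the paper's chart expansions contain sign inconsistencies in $F_s$. I would settle this by checking $[u_1,u_2]=\frac1\varepsilon u_3$ from the explicit chart formulas before concluding $\Gamma\equiv0$.
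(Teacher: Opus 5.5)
Your proof is correct, but it reaches the formula by a different route from the paper.

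\textbf{The paper's route.} The paper does not use flatness. It takes $F_s$ from Theorem \ref{Th1 Sol3 espec}, i.e.\ the geodesic equation solved in the left-invariant frame. It rewrites $F_s$ in the chart using the expressions for $u_1,u_2,u_3$ and integrates $\partial_sF^j=\sum_kF_s^kA_k^j(F)$. The step that makes this integrable is that $\partial_s x=c_1/\varepsilon$ gives $x=A_1+\frac{c_1}{\varepsilon}s$. Hence $\phi-x=-A_1$ is independent of $s$, and the $y,z$ equations have $s$-independent right-hand sides.

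\textbf{Your route.} You observe that for $p=0$ one has $g_{\varepsilon,\alpha}=\varepsilon^2dx^2+\alpha^2(dy^2+dz^2)$. Equivalently, $\partial_x,\partial_y,\partial_z$ are parallel, which follows from the $p=0$ connection table, e.g.\ via $\nabla_{u_1}u_2=\frac{1}{\varepsilon}u_3$ and $\nabla_{u_1}u_3=-\frac{1}{\varepsilon}u_2$. This explains the cancellation the paper relies on: rulings are coordinate straight lines, and $c_1,c_2,c_3$ are just the frame components of the constant direction $V(t)$ at the base point $A(t)$.

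\textbf{What each buys.}
\begin{itemize}
\item Your argument is shorter. It also sidesteps the frame-to-chart bookkeeping where the paper's intermediate expression for $F_s$ does have sign slips: the $\partial_x$ coefficient should be $+c_1/\varepsilon$, and the $\partial_z$ coefficient has the wrong overall sign. The paper's final formula is nonetheless right.
\item Your argument shows that $(x,y,z)\mapsto(\varepsilon x,\alpha y,\alpha z)$ is an isometry onto Euclidean $\mathbb{R}^3$. So for $p=0$, ruled surfaces are Euclidean ruled surfaces, and by Catalan's theorem the minimal ones are pieces of planes and helicoids in these coordinates.
\item The paper's route is the same Lie-algebraic scheme of Section \ref{Sec 3} that it needs for $p\neq 0$, where the metric is not flat. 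It also produces $F_t$ and the relations between the $A_i$ and the $b_i$ along the way.
\end{itemize}

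\textbf{Two minor remarks.} First, flatness does not depend on the rotation convention you worry about. Indeed $(\cos x\,dy+\sin x\,dz)^2+(-\sin x\,dy+\cos x\,dz)^2=dy^2+dz^2$, and the same holds with $x$ replaced by $-x$. The convention only affects the $\pm\sin A_1$ terms when you convert $V$ into $(c_1,c_2,c_3)$, and there you correctly use the paper's chart formulas for $u_2,u_3$. Second, in the paper's notation $\lambda^1=dx$, since it is dual to $e_1=\partial_x$. The form $\varepsilon\,dx$ is the coframe dual to $u_1$, but the resulting metric term $\varepsilon^2dx^2$ is the same.
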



\subsection{Analysis of the Mean Curvature}
Since now we are working with general ruled surfaces the formula for the mean curvature is 
\begin{equation}\label{Min Cond Sol3 espec}	H \ 
=\  \frac{1}{\|F_{t}\wedge F_{s}\|_{g}^{3}}\,
g\left(\|F_{s}\|_{g}^{2}\,\nabla_{F_{t}}F_{t}  \ -\ g(F_{t}\,,\,F_{s})\,\left(\nabla_{F_{t}}F_{s} \ +\ \nabla_{F_{s}}F_{t}\right) ,\ F_{t}\times F_{s}\right) .
\end{equation}
\\
Also, just as it happens in the case where $p\neq 0$, it is necessary to simplify the expression for $ \|F_{t}\times F_{s}\|^{3}_{g}\,H$ in order to solve it, and, for that we take $b_{1}$ identically null. Having done that we obtain the following expression for the mean curvature

$$\begin{aligned}\label{Min Cond Sol3 especb1}
	&&\|F_{t}\times F_{s}\|^{3}_{g}\,H  =
	&-\|\vec{c}\|^{2}\,
	\left|\begin{matrix}
	c_{1} & c_{2} & c_{3} \\
	c_{1}' & c_{2}' & c_{3}' \\
	c_{1}'' & c_{2}'' & c_{3}'' \\
	\end{matrix}\right|\,s^{2}  +  
	\left(\inn{\left(\|\vec{c}\|^{2}\vec{b}\right)' ; \vec{c}'\times \vec{c}} - 
	\inn{\|\vec{c}\|^{2}\vec{b} ; \left(\vec{c}'\times \vec{c}\right)'}\right)s\, +\,\\\\[-5pt]
	&&&-\, c_{1}\,
	\left|\begin{matrix}
		b_{2} & b_{3} \\
		b_{2}' & b_{3}'
	\end{matrix}\right|\,\|\vec{c}\|^{2}  +  2\,\inn{\vec{b} , \vec{c}}\,\inn{\vec{b} , \vec{c}'\times \vec{c}} ,
\end{aligned} $$
\\
where $\vec{b}(t)\ =\ (0\,,\,b_{2}(t)\,,\,b_{3}(t))\,$ and $\,\vec{c}(t) \ =\ (c_{1}(t)\,,\,c_{2}(t)\,,\,c_{3}(t)) $.
\\

We now focus on the coefficient of $s^{2}$. Given that $\|F_{s}\|_{g}^{2} = \|\vec{c}\|^{2}$ and $F_{s}$ cannot be identically zero, it follows that
$$\left|\begin{matrix}
c_{1} & c_{2} & c_{3} \\
c_{1}' & c_{2}' & c_{3}' \\
c_{1}'' & c_{2}'' & c_{3}'' \\
\end{matrix}\right| \ =\ 0  . $$
\\[-10pt]

Since a determinant is zero if its columns or lines are linearly dependent we separate our analysis in cases.
\\\\
\textbf{(Case \RomanNum{1})} Given the frequency of the term $\vec{c}'\times \vec{c} $ in the expression we choose $\vec{c}'  = f(t)\,\vec{c}$, where $f\in C^{\infty}((t_{1}\,,\,t_{2})) $  is an arbitrary function, then $$\vec{c}(t) \ =\ \vec{c}_{0}\,exp\left(\int_{t_{0}}^{t}f(r)\,dr\right) \ \ \text{ , where }\ \ 
\vec{c}_{0}\,=\,(c_{01}\,,\,c_{02}\,,\,c_{03}) \in \mathbb{R}^{3} . $$
\\[-10pt]

With such choice we  end up with 
$$\|F_{t}\times F_{s}\|^{3}_{g}\,H \ 
=\ -\ c_{1}\,
\left|\begin{matrix}
	b_{2} & b_{3} \\
	b_{2}' & b_{3}'
\end{matrix}\right|\,\|\vec{c}\|^{2}  \ 
=\ 
-\ c_{10}\,
\left|\begin{matrix}
	b_{2} & b_{3} \\
	b_{2}' & b_{3}'
\end{matrix}\right|\,\left(c_{10}^{2}\ +\ c_{20}^{2}\ +\ c_{30}^{2}\right)\,
exp\left(3\,\int_{t_{0}}^{t}f(r)\,dr\right) , $$
which vanishes if $c_{10}\,=\,0 $, or $ b_{2}\,=\,0 $, or there exists a $k\in \mathbb{R}$ such that $b_{3}\,=\,k\,b_{2} $. So we explore these three sub-cases separately.
\\\\
\textbf{(\RomanNum{1}.1)} If $c_{10}=0 $, then the expression for the parametrized ruled surface of Theorem \ref{Ruled Surf Param Espec} becomes
\\
$$F(t\,,\,s)\ =\ 
\left(0\ ,\ 
\frac{B_{2}(t)}{\alpha}\ +\ \frac{c_{02}}{\alpha}\,e^{G(t)}\,s \ ,\ 
\frac{B_{3}(t)}{\alpha}\ +\ \frac{c_{03}}{\alpha}\,e^{G(t)}\,s\right) ,
$$
\\
$$\text{where } B_{2}(t)=\int_{t_{0}}^{t}b_{2}(r)\,dr \ ,\ 
B_{3}(t)=\int_{t_{0}}^{t}b_{3}(r)\,dr \ \text{ and }\ 
G(t)\ =\ \int_{t_{0}}^{t}f(r)\,dr . $$
\\
Also, since $\|F_{t}\times F_{s}\|_{g} \ =\ \exp(G)\,\left(b_{2}\,c_{03}\ -\ b_{3}\,c_{02}\right)$, we require this determinant to be non-vanishing to ensure that the surface $F$ does not degenerate into a curve.
\\\\
\textbf{(\RomanNum{1}.2)} If $b_{2}$ is identically null, then the parametrization of the ruled surfaces becomes
$$F(t\,,\,s)\ =\ 
	\left(
		\frac{c_{01}}{\varepsilon}e^{G(t)}\,s \ ,\ 
		\frac{c_{02}}{\alpha}\,e^{G(t)}\,s \ ,\ 
		\frac{c_{03}}{\alpha}\,e^{G(t)}\,s \ +\ \frac{1}{\alpha}\,B_{3}(t)
	\right)  $$

In this situation, $\|F_{t}\times F_{s}\|^{2}\ 
=\ b_{3}^{2}\,exp\left(2\,G(t)\right)\,
\left(c_{01}^{2}\,+\,c_{02}^{2}\right)  , $ meaning that we have to impose that $b_{3}$ is non vanishing and $c_{01}^{2}\,+\,c_{02}^{2}\neq 0 $. 
\\\\
\textbf{(\RomanNum{1}.3)} If $b_{3}\,=\,k\,b_{2}$, then the ruled surface will be 
$$F(t\,,\,s)\ =\ 
F(t,s)\ 
=\ \left(\frac{c_{01}}{\varepsilon}\,e^{G(t)}\,s \ ,\ 
\frac{c_{02}}{\alpha}\,e^{G(t)}\,s \ +\ \frac{1}{\alpha}\,B_{2}(t) \ ,\ 
\frac{c_{03}}{\alpha}\,e^{G(t)}\,s \ +\ \frac{k}{\alpha}\,B_{2}(t)\right) , $$
\\
Also, in this situation, 
$$\|F_{t}\times F_{s}\|^{2}\ 
=\ b_{2}^{2}\,\left(\left(1\,+\, k^{2}\right)\,c_{01}^{2}\ +\ 
\left(k\,c_{02}\,-\,c_{03}\right)^{2}\right)\,
exp\left(2\,G(t)\right),  $$
\\
which is non vanishing if $b_{2} $ also is a nowhere vanishing function and $c_{10}\neq 0 $ or $k\,c_{2}\neq c_{3} $.
\\\\
\textbf{(Case \RomanNum{2})} We now suppose that $\vec{b} = 0$ and that there exists $k_{1}\,,\,k_{2}\in \mathbb{R} $ such that $c_{1} = k_{1}\,c_{2} + k_{2}\,c_{3}$, then the expression for the parametrized ruled surface of Theorem \ref{Ruled Surf Param Espec} becomes
$$F(t\,,\,s)\ =\ 
\left(\frac{1}{\varepsilon}\,\left(k_{1}\,c_{2}\ +\ k_{2}\,c_{3}\right)\,s \ ,\ 
\frac{c_{2}(t)}{\alpha}\,s\ ,\ 
\frac{c_{3}(t)}{\alpha}\,s
\right) $$

Also, since $\|F_{t}\times F_{s}\|^{2}_{g} \ 
=\ \left(1 + k_{1}^{2} + k_{2}^{2}\right)\,
\left(c_{3}c_{2}'\, -\, c_{2}c_{3}'\right)^{2}\,s^{2} , $ we must require that the following determinant does not vanish
$$\left|\begin{matrix}
	c_{2} & c_{3} \\
	c_{2}' & c_{3}'
\end{matrix}\right| ,$$
\\
which is equivalent to say that $ c_{3}$ and $c_{2} $ are never linearly dependent functions.
\\

We now go back to the situation where $b_{1}$ is not identically zero and apply the conditions found in cases (\RomanNum{1}) and (\RomanNum{2}) in order to simplify the equation \eqref{Min Cond Sol3 espec}. Assuming the hypothesis of \textbf{(Case \RomanNum{1}.1)}, being $\vec{c}' \ =\ f(t)\,\vec{c} $ and $c_{01}=0 $, then we have that

$$\|F_{t}\times F_{s}\|^{3}H \ 
=\ \left(c_{2}^{2}\,+\,c_{3}^{2}\right)\,
\left(
\left|\begin{matrix}
	b_{2} & c_{2}\\
	b_{3} & c_{3}
\end{matrix}\right|\,b_{1}' \ +\ 
\frac{q}{\varepsilon}\,\left(b_{2}\,c_{2}\, +\, b_{3}\,c_{3}\right)\,b_{1}^{2} \ -\ 
\left|\begin{matrix}
	b_{2}' & c_{2}\\
	b_{3}' & c_{3}
\end{matrix}\right|\,b_{1}
\right) $$
\\
Supposing the function given by the determinant that multiplies $b_{1}' $ is non vanishing, we can see this expression as a Bernoulli differential equation for $b_{1}(t)$ and, by solving it we obtain
$$b_{1}(t)\ 
=\ \exp\left(L(t)\right)
\left(k  + \frac{q}{\varepsilon}\int_{t_{0}}^{t}\exp(L(r))
\frac{b_{2}(r)\,c_{02} + c_{03}\,b_{3}(r)}{b_{2}(r)\,c_{03} -\,b_{3}(r)\,c_{02}}\, dr\right)^{-1}
\text{ , where }\ L(t)\ =\ \int_{t_{0}}^{t}\frac{c_{03}\,b_{2}'(r)\,-\,c_{02}\,b_{3}'(r)}{b_{2}(r)\,c_{03}\,-\,b_{3}(r)\,c_{02}}\ dr . $$
\\
Substituting the restrictions above into the parametrization from Theorem \ref{Ruled Surf Param Espec}, we obtain the surface
\\
$$F(t,s)\ 
=\ \left(\begin{gathered}
	-\frac{1}{\varepsilon}\,B_{1}(t) \\\\
	\frac{1}{\alpha}\,\int_{t_{0}}^{t} b_{2}\,\cos\left(\frac{1}{\varepsilon}B_{1}\right) + b_{3}\,\sin\left(\frac{1}{\varepsilon}\,B_{1}\right)\,dt' \ +\ 
	\frac{1}{\alpha}\exp\left(G(t)\right)
	\left(c_{02}\,\cos\left(\frac{1}{\varepsilon}B_{1}\right) +
	c_{03}\,\sin\left(\frac{1}{\varepsilon}\,B_{1}\right)\right)\,s \\\\
	\frac{1}{\alpha}\,\int_{t_{0}}^{t} b_{3}\,\cos\left(\frac{1}{\varepsilon}B_{1}\right) - b_{2}\,\sin\left(\frac{1}{\varepsilon}\,B_{1}\right)\,dt' \ +\ 
	\frac{1}{\alpha}\exp\left(G(t)\right)
	\left(c_{03}\,\cos\left(\frac{1}{\varepsilon}B_{1}\right) - 
	c_{02}\,\sin\left(\frac{1}{\varepsilon}\,B_{1}\right)\right)\,s
\end{gathered}\right), $$
\\
where $$B_{1}(t)\ =\ \int_{t_{0}}^{t}b_{1}(r)\,dr . $$

Then, in this situation,
$$\|F_{t}\times F_{s}\|^{2}\ 
=\ b_{1}^{2}\,\left(c_{2}^{2}\,+\,c_{3}^{2}\right) \ +\ 
\left(\frac{q}{\varepsilon}\,b_{1}\,\left(c_{2}^{2}\,+\,c_{3}^{2}\right)\,s \ +\ 
\left|\begin{matrix}
	b_{2} & c_{2}\\
	b_{3} & c_{3}
\end{matrix}\right|
\right)^{2} , $$
which, by our assumption is nowhere vanishing if $b_{1}$ also is a non vanishing function and $c_{01}^{2}+c_{03}^{2}\neq 0 $.
\\

Now, turning our attention to the case where the determinant is identically null, we will have that $b_{2}= l\,c_{2} $ and $b_{3}= l\,c_{3} $ for some $l\in \mathbb{R} $. Then, the expression for the mean curvature becomes
$$\|F_{t}\times F_{s}\|^{3}_{g}\,H\ 
=\ -\frac{l}{\varepsilon}\,b_{1}\,\left(c_{02}^{2}\ +\ c_{03}^{2}\right)\,\exp\left(2G(t)\right) . $$
\\
Since $b_{1}$ is taken as not identically null and $c_{02} $, $c_{03} $ can't be both zero, because otherwise we have $\|F_{s}\|=0 $, our only alternative is to take $l=0 $. Substituting these restrictions into the parametrization from Theorem \ref{Ruled Surf Param Espec} we obtain the surface
\\
$$F(t,s)\ 
=\ \left(
	-\frac{B_{1}}{\varepsilon} \ ,\
	\frac{e^{G(t)}}{\alpha}
	\left(c_{02}\,\cos\left(\frac{B_{1}}{\varepsilon}\right) + 
	c_{03}\,\sin\left(\frac{B_{1}}{\varepsilon}\right)\right)\,s \ ,\
	\frac{e^{G(t)}}{\alpha}
	\left(c_{03}\,\cos\left(\frac{B_{1}}{\varepsilon}\right) + 
	c_{02}\,\sin\left(\frac{B_{1}}{\varepsilon}\right)\right)\,s
\right) . $$
\\

Assuming conditions of \textbf{(Case \RomanNum{1}.2)}, meaning $\vec{c}'= f\,\vec{c} $ and $b_{2} $ identically null, equation \eqref{Min Cond Sol3 espec} becomes
$$\|F_{t}\times F_{s}\|^{3}_{g}\,H \ 
=\ \left(\frac{q}{\varepsilon}\right)^{3}\,b_{1}^{3}\,c_{1}\,
\left(c_{1}^{2} + c_{2}^{2}\right)\,\left(c_{1}^{2} + c_{2}^{2} + c_{3}^{2}\right)\,s \ +\ (\dots)\,s\ +\ (\dots), $$
\\
then, in order to not repeat the last case, we have to impose that $c_{02}=c_{03}=0 $. Doing that the expression simplifies to 
$$\|F_{t}\times F_{s}\|^{3}_{g}\,H \ 
=\ \frac{q}{\varepsilon}\,b_{1}\,b_{3}^{2}\,c_{1}^{2}, $$
meaning that we have to also impose that $b_{3} $ vanishes everywhere. However, since $\|F_{t}\times F_{s}\|^{2}_{g}\ 
=\ b_{3}^{2}\,c_{1}^{2} $, this can't happen. Therefore, this case does not yield a minimal surface.
\\

Assuming conditions of \textbf{(Case \RomanNum{1}.3)}, meaning $\vec{c}'= f\,\vec{c} $ and $b_{3} = k\,b_{2} $ for some $k\in \mathbb{R} $, the analysis is completely analogous to the situation before and also does not yield a minimal surface.
\\\\

Assuming conditions of \textbf{(Case \RomanNum{2})}, meaning $b_{2} $ and $b_{3} $ identically null and $c_{1} = k_{1}\,c_{2} + k_{2}\,c_{3}$ for some $k_{1}\,,\,k_{2}\in \mathbb{R} $, then 
$$\|F_{t}\times F_{s}\|^{3}_{g}\,H \ 
=\ (\dots)\,s^{2}\ +\ (\dots)\,s \ +\ 
2\,b_{1}^{2}\,\left(k_{1}\,c_{2} + k_{2}\,c_{3}\right)\,
\left(\frac{q}{\varepsilon}\,b_{1}\,\left(c_{2}^{2} + c_{3}^{2}\right) \ +\ 
\left(c_{3}\,c_{2}' - c_{2}\,c_{3}'\right)\right). $$
\\[-5pt]

However, by simply taking 
$$b_{1}\ 
=\ \varepsilon\,
\frac{c_{3}\,c_{2}' - c_{2}\,c_{3}'}{c_{2}^{2} + c_{3}^{2}} $$
the entire expression for $\|F_{t}\times F_{s}\|^{3}_{g}\,H $ vanishes, achieving our goal. Substituting these restrictions into the parametrization from Theorem \ref{Ruled Surf Param Espec} we obtain the surface
$$F(t,s)\ 
=\ \left(\tan^{-1}\left(\frac{c_{3}}{c_{2}}\right)  + \frac{s}{\varepsilon}\,(k_{1}c_{2} + k_{2}c_{3}) \ ,\ 
\frac{s}{\alpha}\,
\left(1 + \left(\frac{c_{3}}{c_{2}}\right)^{2}\right)^{-\frac{1}{2}}
\left(c_{2} - \frac{c_{3}^{2}}{c_{2}}\right) \ ,\ 
\frac{2\,s}{\alpha}\,
\left(1 + \left(\frac{c_{3}}{c_{2}}\right)^{2}\right)^{-\frac{1}{2}} c_{3} 
\right) $$
\\
Also, we have that
$$\|F_{t}\times F_{s}\|^{2}_{g} \ 
=\ \left(\frac{\varepsilon}{q}  + \left(k_{2}\,c_{2} - k_{1}\,c_{3}\right)\,s\right)^{2}\,
\frac{(c_{3}c_{2}' - c_{2}c_{3}')^{2}}{c_{2}^{2} + c_{3}^{2}} , $$
\\
meaning that we have to require that $c_{3}c_{2}' - c_{2}c_{3}' $ is a non vanishing function.

\hfil
\\\\
We summarize the results proven above in the following theorem:
\\
\begin{theorem}\label{The Min Surf Espec}
	The following are ruled minimal surfaces in $\Sol(3)$:
	\\\\
	(1) $$F(t\,,\,s)\ =\ 
	\left(0\ ,\ 
	\frac{B_{2}(t)}{\alpha}\ +\ \frac{c_{02}}{\alpha}\,e^{G(t)}\,s \ ,\ 
	\frac{B_{3}(t)}{\alpha}\ +\ \frac{c_{03}}{\alpha}\,e^{G(t)}\,s\right) , $$
	\\
	$\text{where } c_{02}\,,\,c_{03}\in \mathbb{R},\ G(t)\text{ is an arbitrary smooth function and } $ $B_{2}(t)\,,\,B_{3}(t) $ are smooth functions such that, for all values of $t\in (t_{1},t_{2})$, $c_{03}\,B'_{2}(t)\neq c_{02}\,B_{3}'(t) $.
	\\\\
	(2) $$F(t\,,\,s)\ =\ 
		\left(
			\frac{c_{01}}{\varepsilon}\,e^{G(t)}\,s \ ,\ 
			\frac{c_{02}}{\alpha}\,e^{G(t)}\,s \ ,\ 
			\frac{c_{03}}{\alpha}\,e^{G(t)}\,s \ +\ \frac{1}{\alpha}\,B_{3}(t)
		\right) , $$
		\\
		$\text{where } G(t) \text{ is an arbitrary smooth function, } B_{3}(t)$ is a smooth function whose derivative never vanishes and $c_{01}\,,\,c_{02}\,,\,c_{03}\in \mathbb{R}$ such that $c_{01}^{2} + c_{02}^{2}\neq 0 $.
		\\\\ 
	(3)$$F(t,s)\ 
		=\ \left(\frac{c_{01}}{\varepsilon}\,e^{G(t)}\,s \ ,\ 
		\frac{c_{02}}{\alpha}\,e^{G(t)}\,s \ +\ \frac{1}{\alpha}\,B_{2}(t) \ ,\ 
		\frac{c_{03}}{\alpha}\,e^{G(t)}\,s \ +\ \frac{k}{\alpha}\,B_{2}(t)\right) , $$
		\\
		$\text{where } G(t) \text{ is an arbitrary smooth function, } B_{2}(t)  $ is a smooth function whose derivative never vanishes and $k\,,\,c_{01}\,,\,c_{02}\,,\,c_{03}\in \mathbb{R} $ such that $c_{01}\neq 0 \text{ or } c_{03}\neq k\,c_{02} $.
		\\\\
	(4) $$F(t\,,\,s)\ =\ 
	\left(\frac{1}{\varepsilon}\,\left(k_{1}\,c_{2}\ +\ k_{2}\,c_{3}\right)\,s \ ,\ 
	\frac{c_{2}(t)}{\alpha}\,s\ ,\ 
	\frac{c_{3}(t)}{\alpha}\,s
	\right), 
	\text{where } c_{2}(t)\,,\,c_{3}(t) \text{ are smooth functions such that for all  }  $$
	\\
	values of $t\in (t_{1},t_{2})$, $c_{2}(t)c_{3}'(t) \neq c_{3}(t)c_{2}'(t) $ and $k_{1}\,,\,k_{2}\in \mathbb{R}. $
	\\\\
	(5) $$F(t,s)\ 
	=\ \left(\begin{gathered}
		-\frac{1}{\varepsilon}\,B_{1} \\\\
		\frac{1}{\alpha}\,\int_{t_{0}}^{t} b_{2}\,\cos\left(\frac{1}{\varepsilon}B_{1}\right) + b_{3}\,\sin\left(\frac{1}{\varepsilon}\,B_{1}\right)\,dt' \ +\ 
		\frac{1}{\alpha}\exp\left(G(t)\right)
		\left(c_{02}\,\cos\left(\frac{1}{\varepsilon}B_{1}\right) +
		c_{03}\,\sin\left(\frac{1}{\varepsilon}\,B_{1}\right)\right)\,s \\\\
		\frac{1}{\alpha}\,\int_{t_{0}}^{t} b_{3}\,\cos\left(\frac{1}{\varepsilon}B_{1}\right) - b_{2}\,\sin\left(\frac{1}{\varepsilon}\,B_{1}\right)\,dt' \ +\ 
		\frac{1}{\alpha}\exp\left(G(t)\right)
		\left(c_{03}\,\cos\left(\frac{1}{\varepsilon}B_{1}\right) - 
		c_{02}\,\sin\left(\frac{1}{\varepsilon}\,B_{1}\right)\right)\,s
	\end{gathered}\right), $$
	\\
	where $G(t)$ is an arbitrary smooth function, $c_{02}\,,\,c_{03}\in \mathbb{R}$, $b_{2}(t)\,,\,b_{3}(t) $ are smooth functions such that $c_{03}b_{2}(t)\neq c_{02}b_{3}(t) $ for all values of $t\in (t_{1},t_{2}) $ and 
	$$B_{1}(t)\ 
	=\ \int_{t_{0}}^{t}\exp\left(L(t')\right)
	\left(k  + \frac{q}{\varepsilon}\int_{t_{0}}^{t'}\exp(L(r))
	\frac{b_{2}(r)\,c_{02} + c_{03}\,b_{3}(r)}{b_{2}(r)\,c_{03} -\,b_{3}(r)\,c_{02}}\, dr\right)^{-1} dt'
	\text{ , where } L(t) = \int_{t_{0}}^{t}\frac{c_{03}\,b_{2}'(r)\,-\,c_{02}\,b_{3}'(r)}{b_{2}(r)\,c_{03}\,-\,b_{3}(r)\,c_{02}} dr . $$
	\\
	(6) $$F(t,s)\ 
	=\ \left(
		-\frac{B_{1}}{\varepsilon} \ ,\
		\frac{e^{G(t)}}{\alpha}
		\left(c_{02}\,\cos\left(\frac{B_{1}}{\varepsilon}\right) + 
		c_{03}\,\sin\left(\frac{B_{1}}{\varepsilon}\right)\right)\,s \ ,\
		\frac{e^{G(t)}}{\alpha}
		\left(c_{03}\,\cos\left(\frac{B_{1}}{\varepsilon}\right) + 
		c_{02}\,\sin\left(\frac{B_{1}}{\varepsilon}\right)\right)\,s
	\right) ,  $$
	\\
	$\text{where }G(t) \text{ is an arbitrary smooth function  }, $ $B_{1}(t) $ is a smooth function whose derivative does not vanishes and $c_{02}\,,\,c_{03}\in \mathbb{R} $.
	\\\\
	(7) $$F(t,s)\ 
	=\ \left(\tan^{-1}\left(\frac{c_{3}}{c_{2}}\right)  + \frac{s}{\varepsilon}\,(k_{1}c_{2} + k_{2}c_{3}) \ ,\ 
	\frac{s}{\alpha}\,
	\left(1 + \left(\frac{c_{3}}{c_{2}}\right)^{2}\right)^{-\frac{1}{2}}
	\left(c_{2} - \frac{c_{3}^{2}}{c_{2}}\right) \ ,\ 
	\frac{2\,s}{\alpha}\,
	\left(1 + \left(\frac{c_{3}}{c_{2}}\right)^{2}\right)^{-\frac{1}{2}} c_{3} 
	\right),  $$
	where $k_{1}\,,\,k_{2}\in \mathbb{R} $ and $c_{2}$ and $c_{3}c_{2}' - c_{2}c_{3}'  $ are non vanishing functions.
\end{theorem}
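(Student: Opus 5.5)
The plan is to verify minimality of each listed surface directly from its tangent frame, not from the chart parametrization. We use formula \eqref{Min Cond Sol3 espec} together with the frames of Theorem \ref{Th1 Sol3 espec}, which are tied to the chart parametrization of Theorem \ref{Ruled Surf Param Espec} by the relations
\[
A_1'=-b_1/\varepsilon, \qquad A_2'=\tfrac{1}{\alpha}(b_2\cos A_1-b_3\sin A_1), \qquad A_3'=\tfrac{1}{\alpha}(b_3\cos A_1+b_2\sin A_1).
\]
For each item (1)--(7), the first step is to read off the data $(\vec c,\vec b,b_1)$ that produces it. In items (1)--(3) and (5)--(6) we take $\vec c=\vec c_0e^{G}$, i.e.\ $\vec c'=f\vec c$ with $f=G'$. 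Items (1)--(3) have $b_1=0$, so $A_1$ is constant, and the remaining data are as follows:
\begin{itemize}
\item[(1)] $c_{01}=0$ and $B_i'=b_i$.
\item[(2)] $b_2=0$.
\item[(3)] $b_3=kb_2$.
\end{itemize}
Item (4) is $\vec b=0$, $b_1=0$, $c_1=k_1c_2+k_2c_3$. Items (5)--(7) are the $b_1\neq0$ refinements. We then plug these data into the general expression for $\|F_t\times F_s\|^3H$. For $b_1=0$ this is the displayed polynomial in $s$; for $b_1\ne0$ we use the full formula. Minimality means showing that every coefficient vanishes and that $\|F_t\times F_s\|\neq0$ under the stated nondegeneracy hypotheses.

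For the $b_1=0$ items the verification is short. Whenever $\vec c'\parallel\vec c$, both $\vec c'\times\vec c=0$ and the $3\times3$ Wronskian determinant vanish. This kills the $s^2$ term, the $s$ term, and the term $2\langle\vec b,\vec c\rangle\langle\vec b,\vec c'\times\vec c\rangle$, leaving only $-c_1\det\begin{pmatrix} b_2&b_3\\ b_2'&b_3'\end{pmatrix}\|\vec c\|^2$. This remainder is zero when $c_{01}=0$, when $b_2=0$, or when $b_3=kb_2$.

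For item (4) we use $\vec b=0$, which removes every term except the $s^2$ one. Its determinant vanishes because the first column $c_1$ is a fixed linear combination of the other two columns, and this persists after differentiation. In each case the nondegeneracy condition is exactly the nonvanishing of the norm $\|F_t\times F_s\|$ computed in the corresponding subcase, so we quote those computations.

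For items (5)--(7), with $b_1\neq0$, we recompute the mean curvature expression with $b_1$ retained. In the setting of (5) and (6) it reduces to $(c_2^2+c_3^2)$ times a first-order expression in $b_1$. For (5) we check that the stated $b_1=B_1'$ solves this Bernoulli equation, which we do by substituting $1/b_1$ to linearize it. For (6) the expression vanishes identically because $b_2=b_3=0$ corresponds to $l=0$. For (7) we check that the choice $b_1=\varepsilon(c_3c_2'-c_2c_3')/(c_2^2+c_3^2)$ kills the constant term, and then confirm that the $s$ and $s^2$ coefficients also vanish for this choice. We also note that $A_1=-B_1/\varepsilon$ integrates to $\tan^{-1}(c_3/c_2)$ up to a constant, which accounts for the first coordinate.

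The main obstacle is item (7), together with the general $b_1\neq0$ mean curvature formula on which (5)--(7) rest. The terms coming from $\nabla_{F_t}F_t$ and from the $g(F_t,F_s)$ correction, which is nonzero since the surface is not orthogonal, must cancel at every order in $s$. I would handle this by working in the rotated frame with angle $\phi$, so that every coefficient becomes polynomial in $s$. I would also check the chart formula (7) against the integrated relations for $A_2,A_3$, since its stated coordinates may need correction.
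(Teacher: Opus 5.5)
\textbf{The gap is in how you pass from frame data to the stated chart formulas.}

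You quote the relation $A_1'=-b_1/\varepsilon$, but its sign is wrong. Since $u_1=\varepsilon^{-1}\partial_x$, the $u_1$-component of $F_t$ is $\varepsilon\,\partial_t x=\varepsilon A_1'+c_1's$. Comparing with $F_t^1=b_1+sc_1'$ from Theorem~\ref{Th1 Sol3 espec} gives $A_1'=b_1/\varepsilon$. So for items (5)--(7), where $b_1\neq0$, showing that $\|F_t\times F_s\|^3H$ vanishes for some frame data says nothing about the stated surfaces. Those surfaces are produced by the same data with $b_1$ replaced by $-b_1$.

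Your identification of (6) also fails. Its third coordinate has $+c_{02}\sin(B_1/\varepsilon)$, whereas the general parametrization with $\vec b=0$ (equivalently (5) with $b_2=b_3=0$) gives $-c_{02}\sin(B_1/\varepsilon)$. Reading off the data from (6) as written therefore yields $\vec c'\not\parallel\vec c$, not Case I.1 with $l=0$. In addition, (5) contains the symbol $q$, which is never defined, and you do not say what it is. These defects come from the paper's own derivation, so redoing its computations cannot close the gap.

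In fact (6) and (7) are false as written. For $p=0$ one has $\lambda^1=dx$, $\lambda^2=\cos x\,dy+\sin x\,dz$ and $\lambda^3=-\sin x\,dy+\cos x\,dz$. Hence $g=\varepsilon^2dx^2+\alpha^2(dy^2+dz^2)$ is flat, and every item can be checked in the coordinates $X=\varepsilon x$, $Y=\alpha y$, $Z=\alpha z$ against Catalan's theorem.
\begin{itemize}
\item Items (1)--(4) are pieces of planes, so your treatment of them is fine.
\item Item (7): set $\theta=\tan^{-1}(c_3/c_2)$ and $\sigma=s\,c_2/\cos\theta$. The surface becomes $(\varepsilon\theta+\sigma(k_1\cos\theta+k_2\sin\theta),\,\sigma\cos2\theta,\,\sigma\sin2\theta)$. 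Along $\sigma=0$ its mean curvature is $\pm\frac{2}{\varepsilon}(k_1\cos\theta+k_2\sin\theta)$, so it is not minimal unless $k_1=k_2=0$. With the correct sign of $A_1'$, the paper's choice of $b_1$ gives a piece of a vertical plane instead.
\item Item (6) is a right conoid about the $X$-axis. With $\beta=B_1/\varepsilon$, its rulings have angle $\psi(\beta)$ satisfying $\psi'=(c_{02}^2-c_{03}^2)/(c_{02}^2+c_{03}^2+2c_{02}c_{03}\sin2\beta)$. This is not constant when $c_{02}c_{03}\neq0$ and $c_{02}^2\neq c_{03}^2$, so (6) is not minimal.
\item Item (5) is a helicoid exactly when $q$ is read as $1$, unless $b_2c_{02}+b_3c_{03}\equiv0$.
\end{itemize}

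A correct proof must first fix the sign of $A_1'$ and the formulas in (5)--(7). After that, the cleanest verification is the flat-coordinate one. Under it, the corrected (5) and (6) are helicoids with axis parallel to the $X$-axis, and the corrected (7) is planar.
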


\section*{Acknowledgment}
I sincerely thank my advisor Lino Grama  for his invaluable guidance throughout this research. I would also to thank Velichka Milousheva and Aleksandar Petkov for helpful comments. This research was supported by a grant from the "Coordenação de Aperfeiçoamento de Pessoal de Nível Superior" (CAPES).


\end{document}